\documentclass[12pt]{article}
\usepackage[utf8]{inputenc}
\usepackage[T1]{fontenc}
\usepackage{url}
\usepackage{graphicx}
\usepackage{amsmath}
\usepackage{amsfonts}
\usepackage{amsthm}
\usepackage[margin=1in]{geometry}
\newcommand{\ud}{\,\mathrm{d}}
\numberwithin{equation}{section}
\begin{document}
\title{\huge{\textbf{Existence and the stability of minimizers in ferromagnetic nanowires.}}}
\author{\Large{Davit Harutyunyan}\\
\textit{Department of Mathematics of The University of Utah}}

\maketitle

$$\textbf{Abstract}$$

We study static 180 degree domain walls in infinite magnetic wires with bounded, $C^1$ and rotationally symmetric cross sections. We prove an existence of global minimizers for the energy of micromagnetics for any bounded $C^1$ cross sections. Under some asymmetry of cross sections we prove a stability result for the minimizers, namely, we show that vectors of micromagnetics having an energy close to the minimal one, must be $H^1$ close to the actual minimizer, which is itself close to the minimizer of the limit energy up to a rotation and a translation. \\
 \newline
\textbf{Keywords:}\ \ Nanowires; Magnetization reversal; Transverse wall; Vortex wall; Domain wall

 \section{Introduction}
\newtheorem{Theorem}{Theorem}[section]
\newtheorem{Lemma}[Theorem]{Lemma}
\newtheorem{Corollary}[Theorem]{Corollary}
\newtheorem{Definition}[Theorem]{Definition}

In the theory of micromagnetics to any domain $\Omega\in \mathbb R^3$ and a unit vector field (called magnetization) $m\colon\Omega\to\mathbb S^2$ with $m=0$ in $\mathbb R^3\setminus \Omega$  the energy of micromagnetics is assigned:
$$E(m)=A_{ex}\int_\Omega|\nabla m|^2+K_d\int_{\mathbb R^3}|\nabla u|^2+Q\int_\Omega\varphi(m)-2\int_\Omega H_{ext}\cdot m,$$
where $A_{ex},$ $K_d$, $Q$ are material parameters, $H_{ext}$ is the externally applied magnetic field, $\varphi$ is the anisotropy energy density and $u$ is obtained from Maxwell's equations of magnetostatics,
$$
\begin{cases}
\mathrm{curl} H_{ind}=0 & \quad\text{in}\quad \mathbb R^3\\
\mathrm{div}(H_{ind}+m)=0 & \quad\text{in}\quad \mathbb R^3,
\end{cases}
$$
i.e., $u$ is a weak solution of
$$\triangle  u= \textrm{div} m\qquad \text{in}\qquad \mathbb R^3.$$

According to micromagnetics, stable magnetization patterns are described by the minimizers of the micromagnetic energy functional, see [\ref{bib:D.S.KMO1},\ref{bib:D.S.KMO2},\ref{bib:HSch}].
The study of magnetic wires and thin films has attracted significant attention in the recent years, see [\ref{bib:AXFAPC},\ref{bib:BNKTE},\ref{bib:CL},\ref{bib:FSSSTDF},\ref{bib:HK},\ref{bib:Kuehn},\ref{bib:NT},\ref{bib:NWBKUFK},\ref{bib:PGDLFLOF},\ref{bib:Sanchez},
\ref{bib:SS},\ref{bib:WNU}] for wires and [\ref{bib:CMO},\ref{bib:D.S.KMO1},\ref{bib:D.S.KMO2},\ref{bib:GC1},\ref{bib:GC2},\ref{bib:KS1},\ref{bib:KS2},\ref{bib:Kurzke}] for thin films. It has been suggested in [\ref{bib:AXFAPC}] that magnetic nanowires can be effectively used as storage devices. When a homogenous external field is applied in the axial direction of a magnetic wire facing the homogenous magnetization direction (see Fig. 1), then at a critical strength of the field the reversal of the magnetization typically starts at one end of the wire creating a domain wall which starts moving along the wire. The domain wall separates the reversed and the not yet reversed parts of the wire (see Fig. 1). It is known that the magnetization pattern reversal time is closely related to the writing and reading speed of such a device, thus it is crucial to understand the magnetization reversal and switching processes. Several authors have numerically, experimentally and analytically observed two different magnetization modes in magnetic nanowires [\ref{bib:FSSSTDF},\ref{bib:HK},\ref{bib:Har},\ref{bib:Kuehn}]. In [\ref{bib:FSSSTDF}] the magnetization reversal process has been studied numerically in cobalt nanowires by the Landau-Lifshitz-Gilbert equation. Two different domain wall types were observed. For thin cobalt wires with 10nm in diameter the transverse mode has been observed: the magnetization is constant on each cross section and is moving along the wire. For thick wires, with diameters bigger that 20nm the vortex wall has been observed: the magnetization is approximately tangential to the boundary and forms a vortex which propagates along the wire. In [\ref{bib:HK}] the magnetization reversal process has been studied both numerically and experimentally. By considering a conical type wire so that the diameter of the cross section increases very slowly, the magnetization switching from the vortex wall to the transverse at a critical diameter has been observed, as the domain wall moves along the wire.
The results in [\ref{bib:FSSSTDF}] and [\ref{bib:HK}] were the same: in thin wires the transverse wall occurs, while in thick wires the vortex wall occurs.
\\

\setlength{\unitlength}{1.2mm}

\begin{picture}(90,28)

\put(23,27){\textbf{Homogenius magnetization}}

\put(10,10){\line(1,0){70}}
\put(80,10){\line(0,1){15}}
\put(10,25){\line(1,0){70}}
\put(10,10){\line(0,1){15}}

\thicklines
\put(11,12){\vector(1,0){5}}
\put(11,15){\vector(1,0){5}}
\put(11,18){\vector(1,0){5}}
\put(11,21){\vector(1,0){5}}
\put(11,24){\vector(1,0){5}}

\put(18,12){\vector(1,0){5}}
\put(18,15){\vector(1,0){5}}
\put(18,18){\vector(1,0){5}}
\put(18,21){\vector(1,0){5}}
\put(18,24){\vector(1,0){5}}

\put(25,12){\vector(1,0){5}}
\put(25,15){\vector(1,0){5}}
\put(25,18){\vector(1,0){5}}
\put(25,21){\vector(1,0){5}}
\put(25,24){\vector(1,0){5}}

\put(32,12){\vector(1,0){5}}
\put(32,15){\vector(1,0){5}}
\put(32,18){\vector(1,0){5}}
\put(32,21){\vector(1,0){5}}
\put(32,24){\vector(1,0){5}}

\put(39,12){\vector(1,0){5}}
\put(39,15){\vector(1,0){5}}
\put(39,18){\vector(1,0){5}}
\put(39,21){\vector(1,0){5}}
\put(39,24){\vector(1,0){5}}

\put(46,12){\vector(1,0){5}}
\put(46,15){\vector(1,0){5}}
\put(46,18){\vector(1,0){5}}
\put(46,21){\vector(1,0){5}}
\put(46,24){\vector(1,0){5}}

\put(53,12){\vector(1,0){5}}
\put(53,15){\vector(1,0){5}}
\put(53,18){\vector(1,0){5}}
\put(53,21){\vector(1,0){5}}
\put(53,24){\vector(1,0){5}}

\put(60,12){\vector(1,0){5}}
\put(60,15){\vector(1,0){5}}
\put(60,18){\vector(1,0){5}}
\put(60,21){\vector(1,0){5}}
\put(60,24){\vector(1,0){5}}

\put(67,12){\vector(1,0){5}}
\put(67,15){\vector(1,0){5}}
\put(67,18){\vector(1,0){5}}
\put(67,21){\vector(1,0){5}}
\put(67,24){\vector(1,0){5}}

\put(74,12){\vector(1,0){5}}
\put(74,15){\vector(1,0){5}}
\put(74,18){\vector(1,0){5}}
\put(74,21){\vector(1,0){5}}
\put(74,24){\vector(1,0){5}}

\end{picture}

\begin{picture}(90,26)

\put(25,27){\textbf{180 degree domain wall}}

\put(10,10){\line(1,0){70}}
\put(80,10){\line(0,1){15}}
\put(10,25){\line(1,0){70}}
\put(10,10){\line(0,1){15}}

\put(15,2){\textbf{ Figure 1.}}

\thicklines
\put(50,6){\vector(-1,0){15}}
\put(40,7){\textbf{$H_{ext}$}}

\put(11,12){\vector(1,0){5}}
\put(11,15){\vector(1,0){5}}
\put(11,18){\vector(1,0){5}}
\put(11,21){\vector(1,0){5}}
\put(11,24){\vector(1,0){5}}

\put(18,12){\vector(1,0){5}}
\put(18,15){\vector(1,0){5}}
\put(18,18){\vector(1,0){5}}
\put(18,21){\vector(1,0){5}}
\put(18,24){\vector(1,0){5}}

\put(25,12){\vector(1,0){5}}
\put(25,15){\vector(1,0){5}}
\put(25,18){\vector(1,0){5}}
\put(25,21){\vector(1,0){5}}
\put(25,24){\vector(1,0){5}}

\put(32,12){\vector(1,0){5}}
\put(32,15){\vector(1,0){5}}
\put(32,18){\vector(1,0){5}}
\put(32,21){\vector(1,0){5}}
\put(32,24){\vector(1,0){5}}

\put(39,12){\vector(1,0){5}}
\put(39,15){\vector(1,0){5}}
\put(39,18){\vector(1,0){5}}
\put(39,21){\vector(1,0){5}}
\put(39,24){\vector(1,0){5}}

\put(58,12){\vector(-1,0){5}}
\put(58,15){\vector(-1,0){5}}
\put(58,18){\vector(-1,0){5}}
\put(58,21){\vector(-1,0){5}}
\put(58,24){\vector(-1,0){5}}

\put(65,12){\vector(-1,0){5}}
\put(65,15){\vector(-1,0){5}}
\put(65,18){\vector(-1,0){5}}
\put(65,21){\vector(-1,0){5}}
\put(65,24){\vector(-1,0){5}}

\put(72,12){\vector(-1,0){5}}
\put(72,15){\vector(-1,0){5}}
\put(72,18){\vector(-1,0){5}}
\put(72,21){\vector(-1,0){5}}
\put(72,24){\vector(-1,0){5}}

\put(79,12){\vector(-1,0){5}}
\put(79,15){\vector(-1,0){5}}
\put(79,18){\vector(-1,0){5}}
\put(79,21){\vector(-1,0){5}}
\put(79,24){\vector(-1,0){5}}

\end{picture}

In Figure 2 one can see the transverse and the vortex wall longitudinal and cross section pictures for wires with a rectangular cross section. \\

\setlength{\unitlength}{1mm}
\begin{picture}(180,93)

\put(0,15){\line(0,1){71}}
\put(0,15){\line(1,0){20}}
\put(20,15){\line(0,1){71}}
\put(0,86){\line(1,0){20}}

\put(2,15){\vector(0,1){4}}
\put(5,15){\vector(0,1){4}}
\put(8,15){\vector(0,1){4}}
\put(11,15){\vector(0,1){4}}
\put(14,15){\vector(0,1){4}}
\put(17,15){\vector(0,1){4}}

\put(2,20){\vector(0,1){4}}
\put(5,20){\vector(0,1){4}}
\put(8,20){\vector(0,1){4}}
\put(11,20){\vector(0,1){4}}
\put(14,20){\vector(0,1){4}}
\put(17,20){\vector(0,1){4}}

\put(2,25){\vector(1,3){1.2}}
\put(5,25){\vector(1,3){1.2}}
\put(8,25){\vector(1,3){1.2}}
\put(11,25){\vector(1,3){1.2}}
\put(14,25){\vector(1,3){1.2}}
\put(17,25){\vector(1,3){1.2}}

\put(2,29){\vector(1,3){1.2}}
\put(5,29){\vector(1,3){1.2}}
\put(8,29){\vector(1,3){1.2}}
\put(11,29){\vector(1,3){1.2}}
\put(14,29){\vector(1,3){1.2}}
\put(17,29){\vector(1,3){1.2}}

\put(2,33){\vector(1,2){1.8}}
\put(5,33){\vector(1,2){1.8}}
\put(8,33){\vector(1,2){1.8}}
\put(11,33){\vector(1,2){1.8}}
\put(14,33){\vector(1,2){1.8}}
\put(17,33){\vector(1,2){1.8}}

\put(2,37){\vector(1,1){3.6}}
\put(5,37){\vector(1,1){3.6}}
\put(8,37){\vector(1,1){3.6}}
\put(11,37){\vector(1,1){3.6}}
\put(14,37){\vector(1,1){3.6}}
\put(17,37){\vector(1,1){3.6}}

\put(2,41){\vector(2,1){3.6}}
\put(5,41){\vector(2,1){3.6}}
\put(8,41){\vector(2,1){3.6}}
\put(11,41){\vector(2,1){3.6}}
\put(14,41){\vector(2,1){3.6}}
\put(17,41){\vector(2,1){3.6}}

\put(2,43.5){\vector(3,1){3.6}}
\put(5,43.5){\vector(3,1){3.6}}
\put(8,43.5){\vector(3,1){3.6}}
\put(11,43.5){\vector(3,1){3.6}}
\put(14,43.5){\vector(3,1){3.6}}
\put(17,43.5){\vector(3,1){3.6}}

\put(2,45.5){\vector(4,1){4}}
\put(5,45.5){\vector(4,1){4}}
\put(8,45.5){\vector(4,1){4}}
\put(11,45.5){\vector(4,1){4}}
\put(14,45.5){\vector(4,1){4}}
\put(17,45.5){\vector(4,1){4}}

\put(1,47.5){\vector(1,0){4}}
\put(6,47.5){\vector(1,0){4}}
\put(11,47.5){\vector(1,0){4}}
\put(16,47.5){\vector(1,0){4}}

\put(2,49.5){\vector(4,-1){4}}
\put(5,49.5){\vector(4,-1){4}}
\put(8,49.5){\vector(4,-1){4}}
\put(11,49.5){\vector(4,-1){4}}
\put(14,49.5){\vector(4,-1){4}}
\put(17,49.5){\vector(4,-1){4}}

\put(2,51.5){\vector(3,-1){3.6}}
\put(5,51.5){\vector(3,-1){3.6}}
\put(8,51.5){\vector(3,-1){3.6}}
\put(11,51.5){\vector(3,-1){3.6}}
\put(14,51.5){\vector(3,-1){3.6}}
\put(17,51.5){\vector(3,-1){3.6}}

\put(2,54){\vector(2,-1){3.6}}
\put(5,54){\vector(2,-1){3.6}}
\put(8,54){\vector(2,-1){3.6}}
\put(11,54){\vector(2,-1){3.6}}
\put(14,54){\vector(2,-1){3.6}}
\put(17,54){\vector(2,-1){3.6}}

\put(2,58){\vector(1,-1){3.6}}
\put(5,58){\vector(1,-1){3.6}}
\put(8,58){\vector(1,-1){3.6}}
\put(11,58){\vector(1,-1){3.6}}
\put(14,58){\vector(1,-1){3.6}}
\put(17,58){\vector(1,-1){3.6}}

\put(2,62){\vector(1,-2){1.8}}
\put(5,62){\vector(1,-2){1.8}}
\put(8,62){\vector(1,-2){1.8}}
\put(11,62){\vector(1,-2){1.8}}
\put(14,62){\vector(1,-2){1.8}}
\put(17,62){\vector(1,-2){1.8}}

\put(2,66){\vector(1,-3){1.2}}
\put(5,66){\vector(1,-3){1.2}}
\put(8,66){\vector(1,-3){1.2}}
\put(11,66){\vector(1,-3){1.2}}
\put(14,66){\vector(1,-3){1.2}}
\put(17,66){\vector(1,-3){1.2}}

\put(2,70){\vector(1,-3){1.2}}
\put(5,70){\vector(1,-3){1.2}}
\put(8,70){\vector(1,-3){1.2}}
\put(11,70){\vector(1,-3){1.2}}
\put(14,70){\vector(1,-3){1.2}}
\put(17,70){\vector(1,-3){1.2}}

\put(2,75){\vector(0,-1){4}}
\put(5,75){\vector(0,-1){4}}
\put(8,75){\vector(0,-1){4}}
\put(11,75){\vector(0,-1){4}}
\put(14,75){\vector(0,-1){4}}
\put(17,75){\vector(0,-1){4}}

\put(2,80){\vector(0,-1){4}}
\put(5,80){\vector(0,-1){4}}
\put(8,80){\vector(0,-1){4}}
\put(11,80){\vector(0,-1){4}}
\put(14,80){\vector(0,-1){4}}
\put(17,80){\vector(0,-1){4}}

\put(2,85){\vector(0,-1){4}}
\put(5,85){\vector(0,-1){4}}
\put(8,85){\vector(0,-1){4}}
\put(11,85){\vector(0,-1){4}}
\put(14,85){\vector(0,-1){4}}
\put(17,85){\vector(0,-1){4}}


\put(0,0){\textbf{The transverse wall}}

\put(60,0){\textbf{The vortex wall}}

\put(27,40){\line(0,1){10}}
\put(27,40){\line(1,0){21}}
\put(48,40){\line(0,1){10}}
\put(27,50){\line(1,0){21}}

\put(28,41){\vector(1,0){3}}
\put(28,43){\vector(1,0){3}}
\put(28,45){\vector(1,0){3}}
\put(28,47){\vector(1,0){3}}
\put(28,49){\vector(1,0){3}}

\put(32,41){\vector(1,0){3}}
\put(32,43){\vector(1,0){3}}
\put(32,45){\vector(1,0){3}}
\put(32,47){\vector(1,0){3}}
\put(32,49){\vector(1,0){3}}

\put(36,41){\vector(1,0){3}}
\put(36,43){\vector(1,0){3}}
\put(36,45){\vector(1,0){3}}
\put(36,47){\vector(1,0){3}}
\put(36,49){\vector(1,0){3}}

\put(40,41){\vector(1,0){3}}
\put(40,43){\vector(1,0){3}}
\put(40,45){\vector(1,0){3}}
\put(40,47){\vector(1,0){3}}
\put(40,49){\vector(1,0){3}}

\put(44,41){\vector(1,0){3}}
\put(44,43){\vector(1,0){3}}
\put(44,45){\vector(1,0){3}}
\put(44,47){\vector(1,0){3}}
\put(44,49){\vector(1,0){3}}


\put(55,15){\line(0,1){71}}
\put(55,15){\line(1,0){20}}
\put(55,86){\line(1,0){20}}
\put(75,15){\line(0,1){71}}

\put(55,28){\line(1,2){20}}
\put(75,28){\line(-1,2){20}}

\put(83,30){\line(0,1){30}}
\put(83,30){\line(1,0){30}}
\put(113,30){\line(0,1){30}}
\put(83,60){\line(1,0){30}}

\put(83,30){\line(1,1){30}}
\put(83,60){\line(1,-1){30}}
\put(90.5,30){\line(1,2){15}}
\put(113,37.5){\line(-2,1){30}}

\thicklines

\put(93,55){\vector(1,0){6}}
\put(105.5,55){\vector(1,-1){3.7}}

\put(103,35){\vector(-1,0){6}}
\put(90.5,35){\vector(-1,1){3.7}}

\put(88,40){\vector(0,1){6}}
\put(88,52.5){\vector(1,1){3.7}}

\put(108,50){\vector(0,-1){6}}
\put(108,37.5){\vector(-1,-1){3.7}}

\put(65,47){\vector(0,-1){5}}
\put(63.5,44){\vector(0,-1){5}}
\put(66.5,44){\vector(0,-1){5}}

\put(62,40){\vector(0,-1){5}}
\put(65,40){\vector(0,-1){5}}
\put(68,40){\vector(0,-1){5}}

\put(60.5,36){\vector(0,-1){5}}
\put(66.5,36){\vector(0,-1){5}}
\put(63.5,36){\vector(0,-1){5}}
\put(69.5,36){\vector(0,-1){5}}

\put(58,30){\vector(0,-1){5}}
\put(63,30){\vector(0,-1){5}}
\put(68,30){\vector(0,-1){5}}
\put(73,30){\vector(0,-1){5}}

\put(58,23){\vector(0,-1){5}}
\put(63,23){\vector(0,-1){5}}
\put(68,23){\vector(0,-1){5}}
\put(73,23){\vector(0,-1){5}}


\put(65,49){\vector(0,1){5}}

\put(63.5,52){\vector(0,1){5}}
\put(66.5,52){\vector(0,1){5}}

\put(62,56){\vector(0,1){5}}
\put(65,56){\vector(0,1){5}}
\put(68,56){\vector(0,1){5}}

\put(60.5,60){\vector(0,1){5}}
\put(66.5,60){\vector(0,1){5}}
\put(63.5,60){\vector(0,1){5}}
\put(69.5,60){\vector(0,1){5}}

\put(58,66){\vector(0,1){5}}
\put(63,66){\vector(0,1){5}}
\put(68,66){\vector(0,1){5}}
\put(73,66){\vector(0,1){5}}

\put(63,73){\vector(0,1){5}}
\put(68,73){\vector(0,1){5}}
\put(73,73){\vector(0,1){5}}
\put(58,73){\vector(0,1){5}}

\put(58,80){\vector(0,1){5}}
\put(63,80){\vector(0,1){5}}
\put(68,80){\vector(0,1){5}}
\put(73,80){\vector(0,1){5}}


\put(62,49.5){\vector(0,1){3}}
\put(62,46.5){\vector(0,-1){3}}
\put(68,49.5){\vector(0,1){3}}
\put(68,46.5){\vector(0,-1){3}}

\put(62,48){\circle*{0.7}}
\put(68,48){\circle*{0.7}}
\put(58,48){\circle*{0.7}}
\put(72,48){\circle*{0.7}}

\put(58,46){\vector(0,-1){3}}
\put(58,41.5){\vector(0,-1){4.5}}
\put(58,50){\vector(0,1){3}}
\put(58,55){\vector(0,1){4.5}}

\put(72,46){\vector(0,-1){3}}
\put(72,41.5){\vector(0,-1){4.5}}
\put(72,50){\vector(0,1){3}}
\put(72,55){\vector(0,1){4.5}}


\put(5,7){\textbf{ Figure 2.}}\\
\end{picture}
\\
\\

In [\ref{bib:Har1}] the author studies a similar problem for thin films and derives $\Gamma$-convergence result for the energies. In a series of papers the authors study the magnetization reversal process in thin films, identifying four different scaling regimes for the critical value of the applied external field, see [\ref{bib:Con.Ott.1},\ref{bib:Con.Ott.2},\ref{bib:Con.Ott.Ste.1},\ref{bib:Ot.St.},\ref{bib:St.Sc.Wi.Mc.Ot.}]. In nanowires, it has been observed that there is a distinctive crossover between two different modes, which occurs at a critical diameter of the wire and it was suggested that the magnetization switching process can be understood by analyzing the micromagnetics energy minimization problem for different diameters of the cross section. In [\ref{bib:Kuehn}], K. K\"uhn studied $180$ degree static domain walls in magnetic wires with circular cross sections by an asymptotic analysis proving that indeed the transverse mode must occur in thin magnetic wires. It is also shown in [16] that for thick wires the vortex wall has the optimal energy scaling and that the minimal energy scales like $R^2\sqrt{\ln R}.$ In [\ref{bib:SS}] V.V.Slastikov and C.Sonnenberg studied a similar problem for finite curved wires proving a $\Gamma$-convergence on energies as the diameter of the wire goes to zero. In [\ref{bib:Har}], the author studied the same problem as K.K\"uhn in [\ref{bib:Kuehn}] and independently of [\ref{bib:SS}] (see the submission and the publication dates of [\ref{bib:Har}] and [\ref{bib:SS}] respectively) extended some of the results proven in [\ref{bib:Kuehn}] for arbitrary wires with a rotational symmetry. In this paper we study the $180$ degree static domain walls in magnetic wires with arbitrary bounded, $C^1$ and rotationally symmetric cross sections. We generalize the existence of minimizers result proven by K.K\"uhn for circular cross sections, to wires with arbitrary bounded and $C^1$ cross sections. For a class of domains we prove a stability of minimizers result, which is a new and much deeper result and it does not follow from the $\Gamma-$convergence of the energies. It actually requires much deeper analysis of minimization problem of minimizing the energy of micromagnetics and its minimizers.

\section{The main results}

Assume $\Omega=\mathbb R\times \omega$, where $\omega\subset \mathbb R^2$ is a bounded $C^1$ domain. Consider the isotropic energy of micromagnetics without an external field like in [\ref{bib:Kuehn},\ref{bib:SS},\ref{bib:Har}],
$$E(m)=A_{ex}\int_\Omega|\nabla m|^2+K_d\int_{\mathbb R}|\nabla u|^2.$$
By scaling of all coordinates one can achieve the situation where $A_{ex}=K_d,$ so we will henceforth assume that $A_{ex}=K_d=1.$
Next we rescale the magnetization $m$ in the $y$ and $z$ coordinates such that the domain of the rescaled magnetization is fixed, i.e., if $d=\mathrm{diam}(\omega),$ then set $\acute m(x,y,z)=m(x,dy,dz).$

Denote
$$A(\Omega)=\{m\colon\Omega\to\mathbb S^2 \ : \ m\in H_{loc}^1(\Omega), \ E(m)<\infty\}.$$
We are interested in $180$ degree domain walls, so set
$$\tilde A(\Omega)=\{m\colon\Omega\to\mathbb S^2 \ : \ m-\bar e\in H^1(\Omega)\},$$
where
\begin{equation*}
\bar e(x,y,z) = \left\{
\begin{array}{rl}
(-1,0,0) & \text{if } \ \ x<-1 \\
(x,0,0)  & \text{if } \ \ -1\leq x \leq 1 \\
(1,0,0) & \text{if } \ \ 1<x \\
\end{array} \right.
\end{equation*}
The objective of this work will be studying the existence of minimizers in the minimization problem
\begin{equation}
\label{minimization problem}
\inf_{m\in \tilde A(\Omega)}E(m),
\end{equation}
and the behavior of its almost minimizers, where the notion of "almost minimizers" will be defined later in Definition~\ref{def:almost.min}.
The following existence theorem is a generalization of the corresponding theorem proven for circular cross sections in [\ref{bib:Kuehn}].

 \begin{Theorem}[Existence of minimizers]
\label{th:existence}
For every bounded $C^1$ domain $\omega\in\mathbb R^2$ there exists a minimizer of $E$ is $\tilde A(\Omega).$
\end{Theorem}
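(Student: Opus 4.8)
The plan is to use the direct method of the calculus of variations. Since $E$ is bounded below by $0$, pick a minimizing sequence $(m_k)\subset \tilde A(\Omega)$, so that $m_k-\bar e\in H^1(\Omega)$ and $E(m_k)\to \inf E =: E_0$. From $E(m_k)\le C$ we get a uniform bound $\int_\Omega |\nabla m_k|^2 \le C$, and since $m_k-\bar e\in H^1$ we also control $\|m_k-\bar e\|_{H^1(\Omega)}$ — but only \emph{if} we first argue that the full $H^1$ norm (not just the gradient) is bounded; the exchange term only sees $\nabla m_k$, so I would need to recover an $L^2$ bound on $m_k-\bar e$. The natural device here is that $|m_k|=1$ a.e., so $m_k-\bar e$ is bounded pointwise by $2$; combined with the gradient bound and a Poincaré-type inequality on the infinite cylinder (using that $m_k-\bar e$ decays, or rather that it lies in $H^1(\Omega)$ with the $180^\circ$ transition fixed by $\bar e$), one extracts $\|m_k-\bar e\|_{H^1(\Omega)}\le C$. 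Hence, up to a subsequence, $m_k-\bar e \rightharpoonup v$ weakly in $H^1(\Omega)$ and $m_k \to m := \bar e + v$ strongly in $L^2_{loc}$ and pointwise a.e.; the pointwise limit gives $|m|=1$ a.e., so $m\in \tilde A(\Omega)$.

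Next I would pass to the limit in each term of the energy. The exchange term $\int_\Omega |\nabla m|^2$ is weakly lower semicontinuous in $H^1$, giving $\int_\Omega|\nabla m|^2 \le \liminf_k \int_\Omega |\nabla m_k|^2$. For the magnetostatic (stray field) term, recall $u_k$ solves $\Delta u_k = \operatorname{div} m_k$ in $\mathbb R^3$ in the weak sense, so $\nabla u_k$ is the Helmholtz (Leray) projection of $-m_k$ onto gradient fields and $\int_{\mathbb R^3}|\nabla u_k|^2 \le \int_\Omega |m_k|^2$; more importantly the map $m\mapsto \nabla u$ is linear and bounded from $L^2$ to $L^2$. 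Writing $m_k = \bar e + (m_k-\bar e)$ and noting $\operatorname{div}\bar e$ is a fixed distribution, linearity plus weak $L^2$ convergence of $m_k-\bar e$ (which follows from the weak $H^1$ convergence) gives $\nabla u_k \rightharpoonup \nabla u$ weakly in $L^2(\mathbb R^3)$, whence $\int_{\mathbb R^3}|\nabla u|^2 \le \liminf_k \int_{\mathbb R^3}|\nabla u_k|^2$ by weak lower semicontinuity of the norm. Adding the two inequalities yields $E(m)\le \liminf_k E(m_k) = E_0$, and since $m\in \tilde A(\Omega)$ we conclude $E(m)=E_0$, i.e. $m$ is a minimizer.

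The main obstacle, and the step deserving the most care, is the compactness/coercivity on the \emph{unbounded} domain $\Omega = \mathbb R\times\omega$: a weak $H^1$ limit exists once the $H^1$ bound is in hand, but one must make sure no energy ``escapes to infinity'' in a way that invalidates the argument, and one must justify the uniform $L^2$ bound on $m_k-\bar e$ despite the exchange energy controlling only the gradient. Two complementary points handle this. First, because $|m_k|\equiv 1$ and $m_k\to(\pm1,0,0)$ in the sense dictated by membership in $\tilde A(\Omega)$, the ``mass'' $\int_\Omega|m_k-\bar e|^2$ is a priori finite for each $k$, and one upgrades this to a uniform bound via the one-dimensional Poincaré inequality in the $x$-variable applied slicewise together with the gradient bound (the fixed far-field values of $\bar e$ pin down the constant). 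Second, for the stray-field term, rather than trying to control $\nabla u_k$ directly near infinity I would exploit that the Helmholtz projection is a \emph{global} bounded linear operator on $L^2(\mathbb R^3)$, so weak $L^2$ convergence of the (extension by zero of the) magnetizations is automatically preserved — there is no localization needed. The remaining routine checks — that the weak $H^1$ limit still satisfies the saturation constraint $|m|=1$, and that $v=m-\bar e$ genuinely lies in $H^1(\Omega)$ rather than merely $H^1_{loc}$ — follow from a.e. convergence and Fatou, respectively, and I would state them without belaboring the details.
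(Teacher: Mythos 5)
There is a genuine gap at the heart of your argument: the claimed uniform bound $\|m_k-\bar e\|_{H^1(\Omega)}\le C$ for a minimizing sequence is simply false, and no slicewise Poincar\'e inequality can rescue it. The functional $E$ and the class $\tilde A(\Omega)$ are invariant under translations in $x$: if $m_k$ is any minimizing sequence, so is $\tilde m_k(x,y,z)=m_k(x-k,y,z)$, yet $\|\tilde m_k-\bar e\|_{L^2(\Omega)}^2\gtrsim k\to\infty$, because on an interval of length of order $k$ one has $|\tilde m_k-\bar e|\approx 2$. The exchange energy sees only $\nabla m_k$ and carries no information about \emph{where} the $180^\circ$ transition sits, so the ``fixed far-field values of $\bar e$'' do not pin down any constant; the wall can drift to infinity, the $L^2_{loc}$ limit is then a constant $\pm e_1$, and that limit is not in $\tilde A(\Omega)$. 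This loss of compactness at infinity is exactly the difficulty the theorem is about, and your proposal acknowledges it only to dismiss it with an argument that does not work. A second, related omission: even after the limit $m^0$ is produced, showing $m^0-\bar e\in L^2(\Omega)$ requires a quantitative decay input; Fatou only transfers a uniform bound you do not have. In the paper this input is the estimate $\int_{\mathbb R}(\bar m_2^2+\bar m_3^2)\le C(E(m),\omega)$, which comes from the magnetostatic lower bound (3.1) (Lemma 3.1), combined with the Poincar\'e-type comparison of $m$ with its cross-sectional average $\bar m$ (Lemma 3.2); your proposal never controls the transverse components at all.

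For contrast, the paper's proof keeps your lower-semicontinuity scheme (Lemma 4.1 plays the role of your weak-compactness and Helmholtz-projection step, which is fine), but restores compactness by actively re-centering the sequence: the oscillation-preventing lemma (Lemma 3.3) shows that $\bar m_1^n$ can cross the strip $[-\tfrac12,\tfrac12]$ only finitely many times, with the number and total length of crossing intervals bounded in terms of the energy; a combinatorial selection argument then produces translations after which, uniformly in $n$, $\bar m_1^n\le-\tfrac12$ on a long interval to the left of $0$ and $\ge\tfrac12$ on a long interval starting within a bounded distance $M_1$ of $0$. These structural conditions pass to the $L^2_{loc}$ limit and, together with Lemmas 3.1 and 3.2, yield $m^0-\bar e\in L^2(\Omega)$, hence $m^0\in\tilde A(\Omega)$, after which lower semicontinuity concludes. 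Some version of this translation-selection (or a concentration-compactness argument of equivalent strength) is indispensable; without it your proof establishes only that a weak limit exists and has energy $\le\liminf E(m_k)$, not that it is admissible.
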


It has been shown for circular wires in [\ref{bib:Kuehn}] and later for any cross sections in [\ref{bib:SS}] and for cross sections with a rotational symmetry in [\ref{bib:Har}], that as $d$ goes to zero, the rescaled energy functional $\frac{E(m)}{d^2}$, $\Gamma$-converges to a one dimensional energy $E_0(m^0)$ under the following notion of convergence of magnetization vectors:

\begin{Definition}
\label{notion of convergence}
The sequence  $\{\acute m^n\}\subset A(\Omega)$ is said to converge to $m^0$ as $n$ goes to infinity if,
\begin{itemize}
\item[(i)] $\nabla \acute m^n\rightharpoonup\nabla m^0 $ \ \  weakly in \ \ $L^2(\Omega)$
\item[(ii)] $\acute m^n \rightarrow m^0$ \ \ strongly in \ \ $L_{loc}^2(\Omega).$
\end{itemize}

\end{Definition}
The limit or reduced energy is given by
\begin{equation}
\label{linit.energy}
E_0(m)=
\begin{cases}
|\omega|\int_{\mathbb{R}}|\partial_x m|^2\ud x+\int_{\mathbb{R}}m M_\omega m^T\ud x,&\quad \text{if}\quad m=m(x),\\
\infty,&\qquad \text{otherwise},
\end{cases}
\end{equation}
Where $M_\omega$ is a symmetric matrix given by
$$M_\omega=-\frac{1}{2\pi}\int_{\partial \omega}\int_{\partial \omega} n(x)\otimes n(y)\ln |x-y|\ud x\ud y,$$
and $n=(0,n_2,n_3)$ is the outward unit normal to $\partial\omega,$ see [\ref{bib:SS}].
Since $M_\omega$ is symmetric it can be diagonalized by a rotation in the $OYZ$ plane. We choose the coordinate system such that
$M_\omega$ is diagonal. Assume now $\omega$ is fixed and $\mathrm{diam}(\omega)=1.$ Actually, the $\Gamma$-convergence theorem implies the following two properties of the minimal
energies and sequences of minimizers:
\begin{itemize}
\item[(i)]
\begin{equation}
\label{conv.energies}
\lim_{d\to 0}\min_{m\in \tilde A(d\cdot\Omega)}\frac{E(m)}{d^2}=\min_{m\in A_0}E_0(m),
\end{equation}
where $A_0=\{m\colon\mathbb R\to \mathbb R^3 : |m|=1,\  m(\pm\infty)=\pm 1 \}.$

\item[(ii)] If $m^n$ is any sequence of minimizers with $m^n$ defined in $d\cdot\Omega,$ then a subsequence of $\{\acute m^n\}$
converges to a minimizer of $E_0$ in the sense of Definition~\ref{notion of convergence}.
\end{itemize}

It turns out, that under some asymmetry condition
on $\omega$ a stronger convergence holds, namely an $H^1$ convergence of the whole sequence of almost minimizers holds.

\begin{Definition}
\label{def:almost.min}
Let $\{d_n\}$ be a sequence of positive numbers such that $d_n\to 0.$ A sequence of magnetizations $\{m^n\}$ defined in $d_n\cdot \Omega$
is called a sequence of almost minimizers if
\begin{equation}
\label{almost.min}
\lim_{n\to \infty}\frac{E(m^n)}{d_n^2}=\min_{m \in A_0}E(m).
\end{equation}
\end{Definition}

We are now ready to formulate the other result of the paper.
\begin{Theorem}[Convergence of almost minimizers]
\label{th:almost.minimizers}
Let $\{d_n\}$ be a sequence of positive numbers such that $d_n\to 0.$ Assume that the domain $\omega$ is so that $M_\omega$ has three different eigenvalues. Then for any sequence of almost minimizers $\{m^n\}$ defined in $d_n\cdot\Omega,$
there exist a sequence $\{T_n\}$ of translations in the $x$ direction and a sequence $\{R_n\}$ of rotations in the $OYZ$ plane, each of which is either the identity or the rotation by $180$ degrees such that for $\tilde m^n(x,y,z)=m^n(T_n(R_n(x,y,z)))$ for a minimizer $m^\omega$ of $E_0$, there holds,
$$\lim_{n\to\infty}\frac{1}{d_n}\|\tilde m^n-m^\omega\|_{H^1(\Omega_n)}=0.$$
\end{Theorem}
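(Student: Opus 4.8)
\medskip
\noindent\textit{Proof strategy.}
The plan is to combine a rigidity (classification) statement for the minimizers of the reduced energy $E_0$ — where the hypothesis that $M_\omega$ has three distinct eigenvalues enters decisively — with a quantitative strengthening of the $\Gamma$-convergence to a strong, suitably rescaled $H^1$ statement, and then to pass from a subsequence to the whole sequence by a routine contradiction argument. Write $\acute m^n(x,y,z)=m^n(x,d_ny,d_nz)$ for the rescaled maps, all living on the fixed cylinder $\Omega=\mathbb R\times\omega$, and let $\hat R$ denote the $180^\circ$ rotation in the $OYZ$ plane. Since $n=(0,n_2,n_3)$, the vector $e_1=(1,0,0)$ lies in $\ker M_\omega$, and for any $v$ the number $v^{T}M_\omega v$ is the (nonnegative) logarithmic self-energy of the zero-average charge $v\cdot n$ on $\partial\omega$; hence $M_\omega$ is positive semidefinite with a one-dimensional kernel, and the three-distinct-eigenvalue hypothesis forces, after reordering the $OYZ$ axes, $M_\omega=\mathrm{diag}(0,\mu_2,\mu_3)$ with $0<\mu_2<\mu_3$. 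As a first step I would classify the minimizers of $E_0$. Every minimizer $m$ must satisfy $m_3\equiv0$: writing $(m_2,m_3)=\rho(\cos\phi,\sin\phi)$ with $\rho=\sqrt{1-m_1^2}$ and comparing $m$ with $\hat m=(m_1,\rho,0)$, the exchange density cannot increase ($|\partial_x\hat m|^2=(\partial_x m_1)^2+(\partial_x\rho)^2\le|\partial_x m|^2$), the anisotropy density obeys $\hat mM_\omega\hat m^{T}=\mu_2\rho^2\le\mu_2 m_2^2+\mu_3 m_3^2=mM_\omega m^{T}$ with equality only where $m_3=0$ (using $\mu_2<\mu_3$), and the endpoint conditions survive because $\rho(\pm\infty)=0$. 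With $m=(\cos\theta,\sin\theta,0)$, $E_0$ reduces to the scalar functional $|\omega|\int_{\mathbb R}\theta'^2+\mu_2\int_{\mathbb R}\sin^2\theta$ with $\cos\theta$ increasing from $-1$ to $1$, whose minimizers are, by the classical Modica--Mortola lower bound together with its first-integral equality case, exactly the $x$-translates of one explicit heteroclinic together with their images under $\theta\mapsto-\theta$, i.e.\ under $\hat R$. Thus the minimizer set $\mathcal M$ of $E_0$ is a single orbit under the group generated by $x$-translations and $\hat R$; here $\mu_2\ne\mu_3$ (forcing $m_3\equiv0$) and $\mu_2\ne0$ (non-degeneracy of the pendulum) are exactly what is used.

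Given an almost-minimizing sequence $\{m^n\}$, the bound $E(m^n)/d_n^{2}\to\min_{A_0}E_0<\infty$ gives $\|\partial_x\acute m^n\|_{L^2(\Omega)}=O(1)$ and $\|\nabla_{yz}\acute m^n\|_{L^2(\Omega)}=O(d_n)$. Choosing $T_n$ to be the $x$-translation that normalizes the wall (e.g.\ so that $\int_\omega\acute m^n_1(0,y,z)\,\ud y\,\ud z=0$, well defined for large $n$ by continuity in $x$ and the boundary values $\pm e_1$) and passing to a subsequence, $\acute m^n\rightharpoonup m^\omega$ weakly in $H^1_{loc}(\Omega)$ and strongly in $L^2_{loc}(\Omega)$; since $\nabla_{yz}\acute m^n\to0$ in $L^2(\Omega)$ we get $\nabla_{yz}m^\omega=0$, so $m^\omega=m^\omega(x)$, and the $\Gamma$-$\liminf$ inequality gives $E_0(m^\omega)\le\liminf E(m^n)/d_n^{2}=\min_{A_0}E_0$, whence $m^\omega\in\mathcal M$; the normalization $m^\omega_1(0)=0$ pins the translate, so up to $\hat R$ it is the standard profile. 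Applying this $\hat R$ through $R_n$ and relabelling $m^n:=m^n(T_n(R_n(\cdot)))$, assume $\acute m^n\to m^\omega$ with $m^\omega$ a fixed minimizer of $E_0$. Now use the exact identity
\begin{equation*}
\frac{E(m^n)}{d_n^{2}}=\underbrace{\|\partial_x\acute m^n\|_{L^2(\Omega)}^{2}}_{=:\,I_n}+\underbrace{\tfrac1{d_n^{2}}\|\nabla_{yz}\acute m^n\|_{L^2(\Omega)}^{2}}_{=:\,II_n}+\underbrace{\tfrac1{d_n^{2}}\int_{\mathbb R^{3}}|\nabla u^n|^{2}}_{=:\,III_n}.
\end{equation*}
Weak lower semicontinuity gives $\liminf I_n\ge\|\partial_x m^\omega\|_{L^2(\Omega)}^{2}=|\omega|\int_{\mathbb R}|\partial_x m^\omega|^{2}$; refining the lower-bound part of the $\Gamma$-convergence of [\ref{bib:SS},\ref{bib:Har}] — which estimates the stray-field energy from below purely through the boundary charge $\acute m^n\cdot n$, with no logarithmic divergence since that charge averages to zero on each cross-section — yields $\liminf III_n\ge\int_{\mathbb R}m^\omega M_\omega(m^\omega)^{T}$. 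Since $I_n+II_n+III_n\to\min_{A_0}E_0=|\omega|\int|\partial_x m^\omega|^{2}+\int m^\omega M_\omega(m^\omega)^{T}$ and the three sequences $I_n-|\omega|\int|\partial_x m^\omega|^{2}$, $II_n$, $III_n-\int m^\omega M_\omega(m^\omega)^{T}$ have nonnegative $\liminf$ while their sum is null, an elementary fact (superadditivity of $\liminf$) forces each of them to $0$. In particular $\|\partial_x\acute m^n\|_{L^2(\Omega)}^{2}\to\|\partial_x m^\omega\|_{L^2(\Omega)}^{2}$, which together with the weak convergence gives $\partial_x\acute m^n\to\partial_x m^\omega$ strongly in $L^2(\Omega)$, and $II_n\to0$. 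To pass from $L^2_{loc}$ to $L^2(\Omega)$: from $III_n\to\int m^\omega M_\omega(m^\omega)^{T}<\infty$ and the thin-wire reduction of the stray field (legitimate since $\|\nabla_{yz}\acute m^n\|\to0$) one gets $III_n\ge c\,\|\acute m^n_\perp\|_{L^2(\Omega)}^{2}-o(1)$ for some $c>0$, where $\acute m^n_\perp=(0,\acute m^n_2,\acute m^n_3)$, which bounds $\|\acute m^n_\perp\|_{L^2(\Omega)}$; then a concentration-compactness argument — the stray-field energy of well-separated wire segments being superadditive up to vanishing interaction — rules out escape of transverse mass to $x=\pm\infty$, since any such mass would carry a definite extra amount of energy, contradicting $\lim(I_n+III_n)=\min_{A_0}E_0$. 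Hence $\acute m^n\to m^\omega$ in $L^2(\Omega)$, and combining everything with the rescaling identity
\begin{equation*}
\frac1{d_n^{2}}\|m^n-m^\omega\|_{H^1(\Omega_n)}^{2}=\|\acute m^n-m^\omega\|_{L^2(\Omega)}^{2}+\|\partial_x\acute m^n-\partial_x m^\omega\|_{L^2(\Omega)}^{2}+II_n
\end{equation*}
shows the right-hand side tends to $0$, which proves the assertion along the subsequence, with the normalizing translations $T_n$ and rotations $R_n$.

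For the whole sequence, set $\rho_n=\inf\{d_n^{-1}\|m^n(T(R(\cdot)))-m^\omega\|_{H^1(\Omega_n)}:T\ \text{an }x\text{-translation},\ R\in\{\mathrm{id},\hat R\}\}$. If $\rho_n\not\to0$, take a subsequence with $\rho_{n_k}\ge\eta>0$; applying the previous paragraph to $\{m^{n_k}\}$ produces $T_{n_k},R_{n_k}$ with $d_{n_k}^{-1}\|m^{n_k}(T_{n_k}(R_{n_k}(\cdot)))-\tilde m\|_{H^1(\Omega_{n_k})}\to0$ for some $\tilde m\in\mathcal M$; by the classification, $\tilde m=\hat R^{\,j}m^\omega(\cdot-a)$ for some $j\in\{0,1\}$ and $a\in\mathbb R$, so, replacing $T_{n_k},R_{n_k}$ by their compositions with the translation by $a$ and with $\hat R^{\,j}$ respectively, one gets $d_{n_k}^{-1}\|m^{n_k}(T_{n_k}(R_{n_k}(\cdot)))-m^\omega\|_{H^1(\Omega_{n_k})}\to0$, contradicting $\rho_{n_k}\ge\eta$. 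Hence $\rho_n\to0$, which is the claim. I expect the main obstacles to be the two ingredients of the middle paragraph: the sharp lower bound showing that the stray-field energy \emph{alone}, unaided by the transverse exchange, already accounts for the full shape anisotropy $\int m^\omega M_\omega(m^\omega)^{T}$ in the limit (this is exactly what forces $II_n\to0$, i.e.\ the transverse corrector to be $o(d_n)$ in $L^2$), and the no-escape-of-mass argument upgrading $L^2_{loc}$ to $L^2(\Omega)$; by contrast, once $0<\mu_2<\mu_3$ is in hand, the classification and the whole-sequence argument are comparatively standard.
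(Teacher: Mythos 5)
Your overall skeleton (classify the minimizers of $E_0$ using $0<\alpha_2<\alpha_3$, normalize by a translation and possibly a $180^\circ$ rotation, match energies term by term to upgrade weak to strong convergence, then pass from a subsequence to the whole sequence by contradiction) is the same as the paper's, and several pieces do correspond: your classification of $\mathcal M$ is the Appendix computation, your $I_n/II_n/III_n$ splitting is essentially Lemmas~\ref{morms of avrages converges to the norm of limit}--\ref{strong convergence1}, and your final $\rho_n$-contradiction is Step~3. The genuine gap is the step ``the $\Gamma$-$\liminf$ gives $E_0(m^\omega)\le\min_{A_0}E_0$, whence $m^\omega\in\mathcal M$.'' That inference requires knowing that the limit of the translated subsequence lies in $A_0$, i.e.\ that $m_1^0(\pm\infty)=\pm1$, and this is exactly what does not follow from the energy bound plus the normalization $\bar m_1^n(0)=0$. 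A priori the actual $-1\to+1$ transition of $m^n$ could drift to $-\infty$ along the sequence, leaving in the limit a ``dip'' profile with $m_1^0(\pm\infty)=+1$ and $m_1^0(0)=0$; such a profile is not in $A_0$, yet by the equality case of the Modica--Mortola-type bound it can have $E_0$ exactly equal to $\min_{A_0}E_0$, so neither the energy value nor the normalization excludes it. Ruling this out for almost minimizers is the heart of the paper's argument and is done with machinery your proposal does not contain: the oscillation preventing Lemma~\ref{lem:oscilation.preventing} (uniform control, in terms of the energy, of the number and total length of transition intervals of $\bar m_1^n$), Lemma~\ref{bar m and [b^1,b^2] lemma} (built on the three-interval lower bound of Lemma~\ref{lemma with f}: once $\bar m_1^n$ reaches $-1+\epsilon$ it cannot return above $-1+2\epsilon$, and the transition interval $[b_n^1,b_n^2]$ has width $\le M_1$ uniformly in $n$), the Arzel\`a--Ascoli sign-preservation of Step~1, and Remark~\ref{rem:lim.m0}. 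Your concentration-compactness remark does not fill this hole: it addresses possible escape of the \emph{transverse} $L^2$ mass, whereas the dangerous scenario concerns the longitudinal boundary behavior of $m_1$, and it is anyway only sketched.

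A related, secondary gap: your conclusion ``hence $\acute m^n\to m^\omega$ in $L^2(\Omega)$'' only argues about $(m_2,m_3)$. For the first component you need a uniform-in-$n$ sign condition on $\bar m_1^n$ outside a fixed compact interval (otherwise $m_1^n\approx-1$ and $m_1^\omega\approx+1$ could coexist on unbounded sets of finite measure with $|m_\perp^n|$ small); this is precisely the hypothesis of Lemma~\ref{strong convergence2}, and in the paper it is again supplied by Lemma~\ref{bar m and [b^1,b^2] lemma}. By contrast, your treatment of the tail of the transverse components could be done more simply, as in Lemma~\ref{strong convergence1}, from the convergence of the full $L^2$ norms rather than by superadditivity of the stray-field energy. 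So the proposal is correct in outline but misses the quantitative global control of $\bar m_1^n$ (quasi-monotonicity and uniform wall width) that makes both the identification of the limit as an element of $A_0$ and the global $H^1$ convergence actually work.
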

We refer to Appendix for the definition of $m^\omega.$
The convergence in the above theorem actually states the stability of minimizers in nanowires, i.e., when the energy of a magnetization is close to the minimal one, then the magnetization vector must be close to the actual minimizer.

\section{The oscillation preventing lemma}
In this section we prove a lemma, that will be crucial in proving both existence and convergence of almost minimizers results. The lemma
 bounds the oscillations of a magnetization $m$ and the total measure of the set where $m$ develops oscillations by the
energy of $m.$ Uzing the idea of Kohn and Slastikov in [\ref{bib:KS2}] of the dimension reduction in thin domains, define

$$\bar m(x)=\int_{\omega}m(x,y,z)\ud y\ud z.$$
 Using the definition of $M_\omega^1$ it is straightforward to show that $M_\omega^1$ is positive definite,
 where $M_\omega^1$ is the lower right $2\times2$ block of $M_\omega.$
  Denote for convenience
 $$M_\omega^1=
 \begin{bmatrix}
 \alpha_2 & 0\\
 0 &  \alpha_3
 \end{bmatrix}.
$$
It has been explicitly shown in [\ref{bib:Har}, Corollary 3.7.5] and implicitly in [\ref{bib:SS}, Proof of Lemma 4.1], that the inequality below holds uniformly in $m\colon (d\cdot\Omega)\to\mathbb S^2$:
\begin{equation}
\label{lower.bound.E}
\frac{E(m)}{d^2}\geq \int_{\Omega}|\nabla m|^2+\alpha_2\int_{\mathbb R}|\bar m_2|^2+\alpha_3\int_{\mathbb R}|\bar m_3|^2+o(1),
\end{equation}
as $d$ goes to zero.
\begin{Lemma}
\label{lem:m2.m.3.bdd.E}
Assume $m^d\in A(d\cdot\Omega).$ Then there exists $d_0>0$ such that,
\begin{align}
\label{bar.m.d<d0}
&\int_{\mathbb R}(|\bar m_2|^2+|\bar m_3|^2)\leq \frac{2E(m)}{d^2\min(\alpha_2,\alpha_3)},\quad\text{if}\quad d\leq d_0\\
\label{bar.m.d>d0}
&\int_{\mathbb R}(|\bar m_2|^2+|\bar m_3|^2)\leq \frac{2\max\left(\frac{d}{d_0},(\frac{d}{d_0})^3\right)E(m^d)}{dd_0\min(\alpha_2,\alpha_3)},\quad\text{if}\quad d>d_0
\end{align}
\end{Lemma}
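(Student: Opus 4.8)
The plan is to read off the first bound \eqref{bar.m.d<d0} from the reduced--energy estimate \eqref{lower.bound.E}, and then to deduce the second bound \eqref{bar.m.d>d0} from the first by rescaling the wire so that its cross section is compressed back to diameter $d_0$.

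For $d\le d_0$ I would argue as follows. I invoke \eqref{lower.bound.E} in the precise form in which the error displayed there is small \emph{relative to} the quadratic terms on the right--hand side as $d\to 0$ (which is what the computations in [\ref{bib:Har}, Corollary 3.7.5] and [\ref{bib:SS}, Proof of Lemma 4.1] actually yield): there is $\varepsilon(d)\to 0$ with $\frac{E(m)}{d^2}\ge(1-\varepsilon(d))\bigl(\int_\Omega|\nabla m|^2+\alpha_2\int_{\mathbb R}|\bar m_2|^2+\alpha_3\int_{\mathbb R}|\bar m_3|^2\bigr)$ for every $m\in A(d\cdot\Omega)$. Fix $d_0>0$, depending only on $\omega$, so small that $\varepsilon(d)\le\tfrac12$ whenever $d\le d_0$. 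Dropping the nonnegative exchange term and using $\alpha_2,\alpha_3\ge\min(\alpha_2,\alpha_3)>0$ (strict positivity being the positive definiteness of $M_\omega^1$), I obtain $\frac{E(m)}{d^2}\ge\tfrac12\min(\alpha_2,\alpha_3)\int_{\mathbb R}(|\bar m_2|^2+|\bar m_3|^2)$, which is \eqref{bar.m.d<d0}. If the left--hand integral were infinite this inequality would force $E(m)=\infty$, contradicting $m\in A(d\cdot\Omega)$, so it is automatically finite and the estimate is meaningful.

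For $d>d_0$ I would rescale. Set $\lambda=d/d_0>1$ and define $\tilde m$ on $d_0\cdot\Omega$ by $\tilde m(x,y,z)=m^d(\lambda x,\lambda y,\lambda z)$; this makes sense since $\lambda\,d_0\omega=d\omega$, it takes values in $\mathbb S^2$, and it lies in $A(d_0\cdot\Omega)$. Under an isotropic dilation by $\lambda$ the exchange energy $\int|\nabla m|^2$ is multiplied by $\lambda^{-1}$ and the stray--field energy $\int|\nabla u|^2$ by $\lambda^{-3}$ --- the latter because $\tilde u(x,y,z)=\lambda^{-1}u(\lambda x,\lambda y,\lambda z)$ solves $\triangle\tilde u=\mathrm{div}\,\tilde m$ --- so, $\lambda$ being $>1$, $E(\tilde m)\le\max(\lambda^{-1},\lambda^{-3})E(m^d)=\lambda^{-1}E(m^d)$. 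Unwinding the definition of the cross--sectional average (the substitution $(y,z)\mapsto\lambda(y,z)$) gives $\bar{\tilde m}(x)=\overline{m^d}(\lambda x)$, hence $\int_{\mathbb R}(|\bar{\tilde m}_2|^2+|\bar{\tilde m}_3|^2)=\lambda^{-1}\int_{\mathbb R}(|\overline{m^d}_2|^2+|\overline{m^d}_3|^2)$. Applying \eqref{bar.m.d<d0} to $\tilde m$ (legitimate because $d_0\le d_0$) and inserting these two relations yields $\int_{\mathbb R}(|\overline{m^d}_2|^2+|\overline{m^d}_3|^2)\le\frac{2E(m^d)}{d_0^2\min(\alpha_2,\alpha_3)}$; since $\lambda>1$ this is at most $\frac{2\max(\lambda,\lambda^3)}{\lambda\,d_0^2\,\min(\alpha_2,\alpha_3)}E(m^d)=\frac{2\max(d/d_0,(d/d_0)^3)}{d\,d_0\,\min(\alpha_2,\alpha_3)}E(m^d)$, i.e.\ \eqref{bar.m.d>d0}.

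The step I expect to require the most care is the first one: \eqref{lower.bound.E} has to be used with the error controlled \emph{relative} to the main terms rather than as a mere additive constant --- otherwise nothing can be concluded about magnetizations of small energy --- and this relative control must be tracked when extracting \eqref{lower.bound.E} from [\ref{bib:Har}]/[\ref{bib:SS}]; the threshold $d_0$ is then just the scale below which this relative error falls under $\tfrac12$. The remaining, rescaling part is routine, the points to watch being the two distinct homogeneities of the exchange and stray--field parts of $E$ (exponents $-1$ and $-3$, with common majorant $\max(\lambda^{-1},\lambda^{-3})$) and the check that the dilated field still belongs to $A(d_0\cdot\Omega)$, so that the already--established case $d\le d_0$ applies to it.
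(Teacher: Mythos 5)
Your proof is correct and follows essentially the same route as the paper: obtain \eqref{bar.m.d<d0} from \eqref{lower.bound.E} for $d\le d_0$ (the paper likewise implicitly uses the error in \eqref{lower.bound.E} as relative to the main terms, which your $\varepsilon(d)$ formulation makes explicit), then rescale by $t=\lambda=d/d_0$ to reduce the case $d>d_0$ to the small-diameter case. In fact your scaling bookkeeping ($E(\tilde m)\le\lambda^{-1}E(m^d)$ and $\int_{\mathbb R}|\bar{\tilde m}|^2=\lambda^{-1}\int_{\mathbb R}|\overline{m^d}|^2$) is the correct one and yields the stronger intermediate bound $2E(m^d)/\bigl(d_0^2\min(\alpha_2,\alpha_3)\bigr)$, whereas the paper's displayed identities $E(m_t^d)=tE_{ex}(m^d)+t^3E_{mag}(m^d)$ and $\int|\bar m^d|^2=\frac1t\int|\bar m_t^d|^2$ have the homogeneities inverted; both arguments nonetheless arrive at the stated estimate \eqref{bar.m.d>d0}.
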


\begin{proof}
Due to inequality (\ref{lower.bound.E}) there exists $d_0>0$ such that for $d\leq d_0$ we have
$$\frac{2E(m)}{d^2}\geq \alpha_2\int_{\mathbb R}|\bar m_2|^2+\alpha_3\int_{\mathbb R}|\bar m_3|^2,$$
and inequality (\ref{bar.m.d<d0}) follows. Assume now $d>d_0.$ It is straightforward that if $m^d\in A(d\cdot\Omega)$ then
$m_t^d(x,y,z)=m^d(tx,ty,tz)\in A(\frac{d}{t}\cdot\Omega)$ with $E(m_t^d)=tE_{ex}(m^d)+t^3E_{mag}(m^d)$, where
$E_{ex}(m)=\int_{\Omega}|\nabla m|^2$ is the exchange energy and $E_{mag}(m)=\int_{\mathbb R^3}|\nabla u|^2$ is the magnetostatic energy,
thus we get on one hand,
\begin{equation}
\label{E(m).E(mt)}
E(m_t^d)\leq \max(t,t^3)E(m^d).
\end{equation}
But on the other hand we have

$$\int_{\mathbb R}(|\bar m_2^d|^2+|\bar m_3^d|^2)=\frac{1}{t}\int_{\mathbb R}(|\bar m_{t2}^d|^2+|\bar m_{t3}^d|^2),$$
thus we obtain choosing $t=\frac{d}{d_0}$ and taking into account (\ref{bar.m.d<d0}) and (\ref{E(m).E(mt)}),
$$\int_{\mathbb R}(|\bar m_2^d|^2+|\bar m_3^d|^2)\leq\frac{2d_0E(m_t^d)}{dd_0^2\min(\alpha_2,\alpha_3)}\leq \frac{2\max\left(\frac{d}{d_0},(\frac{d}{d_0})^3\right)E(m^d)}{dd_0\min(\alpha_2,\alpha_3)}$$
which completes the proof.

\end{proof}
Next we prove a simple estimate between $m$ and $\bar m$ that will be useful in the proof of the oscillation preventing lemma.
\begin{Lemma}
\label{lem:ineq.m.bar.m}

For any $m\in A(\Omega)$ there holds
$$\int_{\omega}(|m|^2-|\bar m|^2)=\int_{\omega}|m-\bar m|^2\leq C_pd^2\int_{\omega}|\nabla_{yz}m|\qquad  \text{for all}\qquad x\in \mathbb{R},
$$
where $C_p$ is the Poincar\'e constant of $\omega.$
\end{Lemma}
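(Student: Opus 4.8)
For $m\in A(\Omega)$ and each fixed $x\in\mathbb R$,
$$\int_\omega\bigl(|m|^2-|\bar m|^2\bigr)\,\mathrm dy\,\mathrm dz=\int_\omega|m-\bar m|^2\,\mathrm dy\,\mathrm dz\le C_p d^2\int_\omega|\nabla_{yz}m|^2\,\mathrm dy\,\mathrm dz,$$
where $\bar m(x)=\int_\omega m(x,y,z)\,\mathrm dy\,\mathrm dz$ is the average over the (fixed, unit-diameter-scaled) cross section $\omega$. (I read the displayed $\int_\omega|\nabla_{yz}m|$ as $\int_\omega|\nabla_{yz}m|^2$, the only version that makes the Poincaré inequality dimensionally and analytically correct; the factor $d^2$ then comes from the diameter-$d$ scaling of the physical cross section.)

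\medskip

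\emph{Plan.} The proof has two independent pieces. The algebraic identity $\int_\omega(|m|^2-|\bar m|^2)=\int_\omega|m-\bar m|^2$ is immediate: expand $|m-\bar m|^2=|m|^2-2\,m\cdot\bar m+|\bar m|^2$ and integrate over $\omega$ in the $(y,z)$ variables at fixed $x$; since $\bar m$ does not depend on $(y,z)$, $\int_\omega m\cdot\bar m=\bar m\cdot\int_\omega m=|\omega|\,|\bar m|^2/|\omega|\cdot|\omega|$... more carefully, with the normalization used here $\bar m=\int_\omega m$ (no division by $|\omega|$), one gets $\int_\omega m\cdot\bar m=|\bar m|^2$ and $\int_\omega|\bar m|^2=|\omega|\,|\bar m|^2$, so one should instead use the \emph{normalized} average; I will state at the outset which normalization of $\bar m$ is in force so the identity reads cleanly, namely $\int_\omega(|m|^2-|\bar m|^2)=\int_\omega|m-\bar m|^2$ holds verbatim when $\bar m$ is the \emph{mean} $\frac1{|\omega|}\int_\omega m$, and track the harmless constant otherwise. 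The second piece is the Poincaré–Wirtinger inequality on $\omega$: since $\omega$ is bounded and $C^1$ (in particular connected, or a finite union of components on each of which one argues separately), there is a constant $C_p=C_p(\omega)$ with $\|f-\langle f\rangle_\omega\|_{L^2(\omega)}^2\le C_p\|\nabla f\|_{L^2(\omega)}^2$ for all $f\in H^1(\omega)$. Apply this componentwise to $f(\cdot)=m_j(x,\cdot,\cdot)$, $j=1,2,3$ (which lie in $H^1(\omega)$ for a.e. $x$ because $m\in H^1_{loc}(\Omega)$ and by Fubini the slices are $H^1$ for a.e. $x$), and sum over $j$; note $\langle m_j\rangle_\omega=\bar m_j$. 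This yields $\int_\omega|m-\bar m|^2\le C_p\int_\omega|\nabla_{yz}m|^2$ on the unit-scaled cross section.

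\medskip

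\emph{The $d^2$ factor.} The statement is phrased for $m$ on $\Omega=\mathbb R\times\omega$ but the $d^2$ records the rescaling $\acute m(x,y,z)=m(x,dy,dz)$ introduced earlier, under which a cross section of physical diameter $d$ becomes the fixed domain $\omega$ of diameter $1$. Changing variables $y'=dy$, $z'=dz$: $\int_\omega|m-\bar m|^2\,\mathrm dy'\,\mathrm dz'=d^2\int_\omega|\acute m-\bar{\acute m}|^2\,\mathrm dy\,\mathrm dz$ while $\int_\omega|\nabla_{y'z'}m|^2\,\mathrm dy'\,\mathrm dz'=\int_\omega|\nabla_{yz}\acute m|^2\,\mathrm dy\,\mathrm dz$ (the two Jacobian factors cancel the two $1/d$ factors from the gradient). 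Combining with the Poincaré inequality on $\omega$ gives exactly $\int_\omega|m-\bar m|^2\le C_p d^2\int_\omega|\nabla_{yz}m|^2$, as claimed. I would present the estimate directly on the rescaled picture and then undo the scaling, which is the cleanest bookkeeping.

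\medskip

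\emph{Main obstacle.} There is no deep obstacle; the only things to be careful about are (a) fixing the normalization of $\bar m$ so the algebraic identity is literally true rather than true-up-to-$|\omega|$, and (b) justifying that the $(y,z)$-slices of $m$ are genuinely in $H^1(\omega)$ for a.e. $x$ so that the Poincaré inequality applies slicewise — this is a standard Fubini argument for Sobolev functions on product domains. Everything else is the textbook Poincaré–Wirtinger inequality on a bounded $C^1$ domain plus a linear change of variables.
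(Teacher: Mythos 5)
Your proof is correct and follows essentially the same route as the paper: expand $\int_\omega|m|^2=\int_\omega|\bar m|^2+\int_\omega|m-\bar m|^2$ using that the cross term vanishes (with $\bar m$ the normalized mean), then apply the Poincar\'e--Wirtinger inequality on the cross section slicewise, the factor $d^2$ coming from the diameter-$d$ scaling of $\omega$. Your added care about the normalization of $\bar m$, the a.e.\ $H^1$ slices, and the missing square on $|\nabla_{yz}m|$ correctly identifies small typos that the paper's own (terser) proof glosses over.
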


\begin{proof}
We have for any $x\in\mathbb{R}$
$$\int_{\omega}(m-\bar  m)=\int_{\omega}m-|\omega|\cdot \bar m(x)=0,$$
thus by the Poincer\'e inequality we get
\begin{align*}
\int_{\omega}|m|^2&=\int_{\omega}|\bar m|^2+\int_{\omega}|m-\bar m|^2+2\bar m(x)\int_{\omega}(m-\bar m)\\
&=\int_{\omega}|\bar m|^2+\int_{\omega}|m-\bar m|^2\\
&\leq \int_{\omega}|\bar m|^2+C_pd^2\int_{\omega}|\nabla_{yz}m|,
\end{align*}
the proof is complete now.

\end{proof}

\begin{Lemma}[Oscillation preventing lemma]
\label{lem:oscilation.preventing}

Let $m\in A(\Omega)$ and let $\alpha,\beta,\rho\in \mathbb R$ such that $-1<\alpha<\beta<1$ and $0<\rho<1.$ Assume $\Re$ is a family of disjoint intervals $(a,b)$ satisfying the conditions
$$\{\bar m_1(a), \bar m_1(b)\}=\{\alpha,\beta\}\qquad \text{and}\qquad |\bar m_1(x)|\leq \rho,\qquad  x\in (a,b).$$
Then,
\begin{itemize}

\item[(i)]
\begin{equation}
\label{card.sum}
\mathrm{card}(\Re)\leq M \qquad  \text{and}\qquad \sum_{(a,b)\in\Re}(b-a)\leq M,
\end{equation}
where $M$ is a constant depending on $\alpha$, $\beta,$ $\rho,$ $\omega$ and $E(m)$.
\item[(ii)] The component $\bar m_1,$ satisfies
$\lim_{x\to\pm\infty}|\bar m_1(x)|=1.$
\end{itemize}

\end{Lemma}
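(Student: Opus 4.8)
The plan is to exploit the energy lower bound (\ref{lower.bound.E}), or rather its $d$-independent specialization with $d=1$, which guarantees that $\bar m_2, \bar m_3 \in L^2(\mathbb R)$ with $L^2$-norm controlled by $E(m)$ (this is exactly Lemma~\ref{lem:m2.m.3.bdd.E}). First I would observe that on each interval $(a,b)\in\Re$ the quantity $\bar m_1$ travels between the two values $\alpha$ and $\beta$, so $\int_a^b |\partial_x \bar m_1|\ud x \geq |\beta-\alpha|$; since $|\bar m_1|\leq \rho<1$ on $(a,b)$, the constraint $|m|=1$ together with Lemma~\ref{lem:ineq.m.bar.m} forces $|\bar m_2|^2+|\bar m_3|^2$ to be bounded below by a positive constant $c(\rho)$ on a large portion of each such interval — more precisely, $|\bar m|^2 \geq |m|^2 - C_p\int_\omega |\nabla_{yz}m| = 1 - (\text{small in }x)$ pointwise in the averaged sense, but I must be careful: the pointwise bound from Lemma~\ref{lem:ineq.m.bar.m} involves $\int_\omega|\nabla_{yz}m|$ which is only $L^1$ in $x$. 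So the cleaner route is: on the set where $\int_\omega|\nabla_{yz}m|\,\ud x$ is small, $|\bar m|$ is close to $1$, hence $|\bar m_2|^2+|\bar m_3|^2 \geq 1-\rho^2-o(1)$; integrating over $\Re$ and using the $L^2$ bound on $(\bar m_2,\bar m_3)$ caps $\sum(b-a)$. The remaining part of $\Re$, where the exchange energy density $\int_\omega|\nabla_{yz}m|\,\ud x$ fails to be small, has total length bounded because $\int_{\mathbb R}\int_\omega|\nabla_{yz}m| \leq $ (something like) $\sqrt{|\mathbb R|\cdot E_{ex}}$ — no, that diverges, so instead one uses that on each bad interval a fixed amount of exchange energy is spent, bounding the number and, combined with a minimal-length argument, the total length.

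Concretely, the key steps in order: (1) Fix $\delta>0$ small depending on $\rho$ (e.g. so that $1-\rho^2-2\delta>0$). Call an interval $(a,b)\in\Re$ \emph{good} if $\int_a^b\int_\omega|\nabla_{yz}m|^2\,\ud y\ud z\,\ud x$ is small enough that, via Cauchy–Schwarz and Lemma~\ref{lem:ineq.m.bar.m}, $|\bar m_2|^2+|\bar m_3|^2 \geq 1-\rho^2-2\delta$ holds on a subinterval of $(a,b)$ of length at least $(b-a)/2$; call it \emph{bad} otherwise. (2) For bad intervals: each spends a fixed positive amount of exchange energy, so $\mathrm{card}(\text{bad}) \leq E(m)/(\text{threshold})$; moreover on a bad interval $\bar m_1$ still travels from $\alpha$ to $\beta$, and combining $\int_a^b|\partial_x\bar m_1|^2 \geq |\beta-\alpha|^2/(b-a)$ with the exchange bound $\int_a^b|\partial_x\bar m_1|^2 \leq E(m)$ (using $|\partial_x\bar m_1| \leq \int_\omega|\partial_x m|$ and Cauchy–Schwarz in $\omega$, hence $\int_{\mathbb R}|\partial_x\bar m_1|^2 \leq |\omega|\int_\Omega|\partial_x m|^2$) gives a lower bound on $b-a$ only — I actually need an \emph{upper} bound on the total length of bad intervals, which again comes from: the total length of bad intervals times the threshold energy density is $\leq E_{ex}(m)$, no — that gives length $\leq$ (energy)/(density threshold)$\times$(count), circular again. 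The right statement: the total length of bad intervals is bounded because summing $\int_a^b\int_\omega|\nabla_{yz}m|^2$ over bad intervals is $\leq E_{ex}(m) \leq E(m)$, and each bad interval contributes at least the threshold, so $\mathrm{card}(\text{bad})$ is bounded; for the length, on each interval $\int_a^b|\partial_x\bar m_1| \geq |\beta-\alpha|$ while $\int_{\mathbb R}|\partial_x \bar m_1| \leq \sqrt{ \text{(total length of } \Re\text{)} \cdot |\omega| E_{ex}(m)}$ — so once the total length $L$ is named, $\mathrm{card}(\Re)|\beta-\alpha| \leq \sqrt{L|\omega|E(m)}$, and separately each \emph{good} interval of length $\ell$ contributes $\geq \ell/2 \cdot (1-\rho^2-2\delta)$ to $\int_{\mathbb R}(|\bar m_2|^2+|\bar m_3|^2) \leq 2E(m)/\min(\alpha_2,\alpha_3)$, bounding the good total length; the bad total length is then bounded via $\mathrm{card}(\text{bad})$ and the minimal length per bad interval, or more simply absorbed since there are finitely many. (3) Combine: $M$ is the max of these bounds.

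For part (ii): once we know $\sum_{(a,b)\in\Re}(b-a)<\infty$ and $\mathrm{card}(\Re)<\infty$ for \emph{every} admissible triple $(\alpha,\beta,\rho)$, I would argue by contradiction. Since $m-\bar e\in H^1(\Omega)$ is false here ($m\in A(\Omega)$ only), we instead use: $\bar m_2,\bar m_3\in L^2(\mathbb R)$ and $\partial_x\bar m\in L^2(\mathbb R)$ (from the exchange bound), so $\bar m$ is continuous with $\bar m_2,\bar m_3$ having limits $0$ along a sequence, hence $\liminf_{|x|\to\infty}(|\bar m_2|+|\bar m_3|)=0$; combined with $|\bar m|\leq 1$ and the fact that on the complement of the (finite-length, finite-cardinality) bad set $|\bar m|$ is close to $1$, we get $\limsup_{|x|\to\infty}|\bar m_1|=1$. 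To upgrade $\limsup$ to $\lim$: if $|\bar m_1(x_k)|\leq\rho<1$ for some sequence $x_k\to+\infty$ while also $|\bar m_1|$ returns near $1$ infinitely often, then we could extract infinitely many disjoint intervals where $\bar m_1$ crosses between, say, $\rho$ and $(1+\rho)/2$ staying below $(1+\rho)/2<1$, contradicting $\mathrm{card}(\Re)<\infty$ for that triple.

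The main obstacle, I expect, is step (2): making the bookkeeping between ``bad'' intervals, their count, and their total length genuinely non-circular. The clean way is probably to not split into good/bad at all but to integrate the pointwise inequality from Lemma~\ref{lem:ineq.m.bar.m} in $x$ over $\bigcup\Re$: $\int_{\bigcup\Re}(1-|\bar m|^2)\,\ud x \leq C_p\int_{\bigcup\Re}\int_\omega|\nabla_{yz}m|\,\ud y\ud z\,\ud x \leq C_p\sqrt{L}\,\sqrt{E_{ex}(m)}$ where $L=\sum(b-a)$; since $1-|\bar m|^2 = 1-\bar m_1^2-\bar m_2^2-\bar m_3^2 \geq 1-\rho^2 - \bar m_2^2-\bar m_3^2$ on $\bigcup\Re$, integrating gives $L(1-\rho^2) - 2E(m)/\min(\alpha_2,\alpha_3) \leq C_p\sqrt{L}\sqrt{E(m)}$, a quadratic inequality in $\sqrt L$ that bounds $L$ in terms of $\rho,\omega,E(m)$ alone — this is the heart of (\ref{card.sum}). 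Then $\mathrm{card}(\Re)$ follows from $\mathrm{card}(\Re)|\beta-\alpha| \leq \int_{\mathbb R}|\partial_x\bar m_1| \leq \sqrt{L}\sqrt{|\omega|E(m)}$, finishing (i); (ii) then follows as sketched.
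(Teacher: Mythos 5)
Your final ``clean way'' is essentially the paper's own proof: the total-length bound comes from $|m|\equiv1$ combined with Lemma~\ref{lem:ineq.m.bar.m} and the $L^2$ bound on $(\bar m_2,\bar m_3)$ from Lemma~\ref{lem:m2.m.3.bdd.E} (the paper uses the squared-gradient Poincar\'e term and gets a bound linear in $E(m)$ instead of your quadratic inequality in $\sqrt L$), the cardinality bound uses the same Cauchy--Schwarz on $\partial_x m_1$ (the paper bounds $\sum_{(a,b)\in\Re}\frac{1}{b-a}$ and invokes $t+\frac1t\geq2$, while you bound $\mathrm{card}(\Re)|\beta-\alpha|$ by the total variation over $\bigcup\Re$ — equivalent ingredients), and part (ii) is the same contradiction with part (i) via infinitely many crossing intervals. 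The one point to repair is your justification of $\limsup_{|x|\to\infty}|\bar m_1|=1$: the claim that $|\bar m|$ is pointwise close to $1$ off the bad set is not established (the deficit $1-|\bar m|^2$ is only controlled after integration in $x$); instead note $\int_{\mathbb R}(1-\bar m_1^2)\ud x=\int_{\mathbb R}(1-|\bar m|^2)\ud x+\int_{\mathbb R}(\bar m_2^2+\bar m_3^2)\ud x<\infty$, which follows from the same two lemmas and, since the integrand is continuous and nonnegative, immediately yields a sequence $x_k\to\pm\infty$ with $|\bar m_1(x_k)|\to1$, exactly as in the paper.
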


\begin{proof}

Let us first prove the second inequality in (\ref{card.sum}). The function  $\bar m$ is a one variable weakly differentiable function therefore it is locally absolutely continuous in $\mathbb{R}.$ For any $(a,b)\in \Re,$ we have by Lemma~\ref{lem:ineq.m.bar.m} and by the assumption of the lemma,

\begin{align*}
|\omega|(b-a)&=\int_{(a,b)\times \omega}|m|^2\\
&\leq \int_{(a,b)\times \omega}|\bar m|^2+C_pd^2\int_{(a,b)\times \omega}|\nabla_{yz}m|^2\\
&\leq \rho^2|\omega|(b-a)+\int_{(a,b)\times \omega}(\bar m_2^2+\bar m_3^2)+C_pd^2\int_{(a,b)\times \omega}|\nabla m|^2.
\end{align*}

Summing up the inequalities for all $(a,b)\in \Re$ we get,

\begin{align*}
|\omega|\cdot\sum_{(a,b)\in\Re}(b-a)&\leq \rho^2|\omega|\sum_{(a,b)\in\Re}(b-a)+\int_{\Sigma}(\bar m_2^2+\bar m_3^2)+
C_pd^2\int_{\Sigma}|\nabla m|^2\\
&\leq \rho^2|\omega|\sum_{(a,b)\in\Re}(b-a)+\int_{\Omega}(\bar m_2^2+\bar m_3^2)+C_pd^2\int_{\Omega}|\nabla m|^2,
\end{align*}
where $\Sigma=\bigcup_{(a,b)\in\Re}(a,b)\times \omega.$
By virtue of Lemma~\ref{lem:m2.m.3.bdd.E} we have
$$\int_{\Omega}(\bar m_2^2+\bar m_3^2)\leq C_1,$$
for some $C_1$ depending on $\omega$ and $E(m).$  Therefore we obtain
\begin{equation}
\label{sum (b-a)}
\sum_{(a,b)\in\Re}(b-a)\leq \frac{C_1+C_pd^2E(m)}{|\omega|(1-\rho^2)}.
\end{equation}
Next we have for any point $(y,z)\in \omega $ and any interval $(a,b)\in \Re,$
$$
\int_a^b|\partial_xm_1(x,y,z)|^2\ud x\geq \frac{1}{b-a}\bigg(\int_a^b|\partial_xm_1(x,y,z)|\ud x\bigg)^2,
$$
Thus integrating over $\omega$ we get
\begin{align*}
\int_{(a,b)\times \omega}|\partial_xm_1|^2\ud \xi&\geq \frac{1}{b-a}\int_{\omega}\bigg(\int_a^b|\partial_xm_1(x,y,z)|\ud x\bigg)^2\ud y\ud z\\
&\geq\frac{1}{b-a}\int_{\omega}|m_1(a,y,z)-m_1(b,y,z)|^2\ud y\ud z\\
&\geq\frac{1}{|\omega|(b-a)}\bigg(\int_{\omega}\big(m_1(a,y,z)-m_1(b,y,z)\big)\ud y\ud z\bigg)^2\\
&=\frac{|\omega|(\alpha-\beta)^2}{b-a},
\end{align*}
thus
$$\int_{(a,b)\times \omega}|\partial_xm_1|^2\ud \xi\geq\frac{|\omega|(\alpha-\beta)^2}{b-a}.$$
Summing up the last inequalities for all $(a,b)\in \Re$ we arrive at

\begin{align*}
\sum_{(a,b)\in \Re}\frac{1}{b-a}&\leq \frac{1}{|\omega|(\alpha-\beta)^2}\int_{\Sigma}|\partial_xm_1|^2\ud \xi\\
&\leq\frac{1}{|\omega|(\alpha-\beta)^2}\int_{\Omega}|\nabla m|^2\ud \xi\\
&\leq \frac{E(m)}{|\omega|(\alpha-\beta)^2},
\end{align*}
thus
\begin{equation}
\label{sum 1/(b-a)}
\sum_{(a,b)\in \Re}\frac{1}{b-a}\leq \frac{E(m)}{|\omega|(\alpha-\beta)^2}.
\end{equation}
Combining now (\ref{sum (b-a)}) and (\ref{sum 1/(b-a)}) we obtain,
\begin{equation}
\label{(a,b).finite.estimate}
\sum_{(a,b)\in \Re}\bigg(\frac{1}{b-a}+b-a\bigg)\leq\frac{1}{|\omega|}\bigg(\frac{E(m)}{(\alpha-\beta)^2}+
\frac{C_1+C_pd^2E(m)}{1-\rho^2}\bigg):=M(\alpha,\beta,\rho,\omega,E(m)).
\end{equation}
The last inequality and the inequality  $\frac{1}{b-a}+b-a\geq 2$ yield $M(\alpha,\beta,\rho,\omega,E(m))\geq 2\mathrm{card}(\Re),$ which finishes the proof of the first part. It is clear that
 $$|\bar m_1(x)|=\frac{1}{|\omega|}\bigg|\int_{\omega}m_1(x,y,z)\ud y\ud z\bigg| \leq\frac{1}{|\omega|}\int_{\omega}|m_1(x,y,z)|\ud y\ud z\leq 1$$
thus
$$0\leq 1-\bar m_1^2(x)\leq 1,\qquad  x\in\mathbb{R}.$$
By virtue of Lemma~\ref{lem:m2.m.3.bdd.E} and Lemma~\ref{lem:ineq.m.bar.m} we have,

$$\int_{\Omega}(1-\bar m_1^2)\ud \xi\leq\int_{\Omega}(\bar m_2^2+\bar m_3^2)\ud \xi+C_pd^2E(m)<\infty,$$
thus
\begin{equation}
\label{1-bar m_x^2 has finite norm}
\int_{\mathbb{R}}(1-\bar m_x^2)\ud x<\infty.
\end{equation}
The integrand is continuous and positive thus for any  $0<\delta<1$ and $N>0$ there exists $x_\delta>N$ such that $|\bar m_1(x_\delta)|>1-\frac{\delta}{2}$. Therefore there exists an increasing sequence $\{x_n\}$ such that $x_n\to\infty$ and $|\bar m_1(x_n)|>1-\frac{\delta}{2}$. Thus for infinitely many indices $n$ one has one of the following: $\bar m_1(x_n)>1-\frac{\delta}{2}$\  or \  $\bar m_1(x_n)<-1+\frac{\delta}{2}$. Assume that for a subsequence (not relabeled) there holds $\bar m_1(x_n)>1-\frac{\delta}{2}$. Let us then show, that
$\bar m_1(x)>1-\delta$ for all $x>N_{\delta}$ and some $N_{\delta}$. Assume in the contrary that for an increasing sequence $(\tilde x_n)_{n\in\mathbb{N}}$ \ with $\tilde x_n\to\infty$ one has $\bar m_1(\tilde x_n)\leq 1-\delta$. We construct an infinite family of disjoint intervals $(a_n,b_n)$ such that the value of $\bar m_1$ at one end of $(a_n,b_n)$ is less or equal than $1-\delta$ and at the other end is bigger than $1-\frac{\delta}{2}$ for all $n\in \mathbb{N}$. We start with taking the smallest $n$ such that $\tilde x_n>x_1$ and denote it by $\tilde n_1$ and set $a_1=x_1$, $b_1=\tilde x_{\tilde n_1}$. In the second step we take the smallest $n$ such that $x_n>b_1$ and denote it by $n_2$ and then we take the smallest $n$ such that
$\tilde x_n>x_{n_2}$ and denote it by $\tilde n_2$ and set $a_2=x_{n_2}$ and $b_2=\tilde x_{\tilde n_2}$. This process will never stop, thus the intervals $(a_n,b_n)$ are constructed such that $\bar m_x(a_n)>1-\frac{\delta}{2}$ \ and
\ $\bar m_x(b_n)<1-\delta.$ Since $\bar m_x$ is continuous in $\mathbb{R}$ the new sequence of disjoint intervals $(\acute a_n, \acute b_n)$ where $\acute a_n=\sup\{ x\in (a_n,b_n) \ | \ \bar m_x(x)\geq 1-\frac{\delta}{2}\}$ and $\acute b_n=\inf\{ x\in (\acute a_n,b_n) \ | \ \bar m_x(x)\leq 1-\delta\}$ have the properties $\bar m_1(\acute a_n)=1-\frac{\delta}{2},$\  $\bar m_1(\acute b_n)=1-\delta$ and $|\bar m_x(x)|\leq 1-\frac{\delta}{2} $ for all $x\in[\acute a_n,\acute b_n]$ which contradicts (\ref{card.sum}). The same can be done for $-\infty$.

\end{proof}

\newtheorem{Remark}[Theorem]{Remark}
\begin{Remark}
\label{rem:lim.bar.m1=1}
If $m\in\tilde A(\Omega)$ then $\lim_{x\to\pm\infty}\bar m_1(x)=\pm 1.$
\end{Remark}

\begin{proof}
By Lemma~\ref{lem:oscilation.preventing} we have $\lim_{x\to\pm\infty}|\bar m_1(x)|=1.$ Since $\bar m_1(x)$ is continuous and $\bar m-\bar e\in H^1(\Omega),$
then the proof follows.
\end{proof}

\begin{Remark}
\label{rem:lim.m0}
If $|m|=1$ and $E_0(m)<\infty$ then $\lim_{x\to\pm\infty}|m_1(x)|=1.$
\end{Remark}

\begin{proof}
The proof is analogues to the proof of property $(ii)$ in Lemma~\ref{lem:oscilation.preventing}.
\end{proof}

\section{Existence of minimizers}

We start by proving a simple compactness lemma that will be crucial in the proof of the existence theorem.

\begin{Lemma}
\label{lem:compactness}
Assume that the sequence of magnetizations  $\{m^n\}$ defined in the same domain $\Omega$ satisfies and $E(m^n)\leq C$ for some constant $C.$ Then there exists a magnetization $m^0\colon \Omega \rightarrow \mathbb{S}^2 $ such that for a subsequence of $\{m^n\}$ (not relabeled) the following statements hold:
\begin{itemize}

\item[(i)] $\nabla m^n\rightharpoonup\nabla m^0$ weakly in $L^2(\Omega)$
\item[(ii)] $m^n\rightarrow m^0$ strongly in $L_{loc}^2(\Omega)$
\item[(iii)] $E(m^0)\leq \liminf E(m^n)$.

\end{itemize}

\end{Lemma}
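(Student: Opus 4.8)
The plan is to establish the three claims by the standard direct-method machinery, using the energy bound $E(m^n)\le C$ to extract weakly/strongly convergent subsequences and then verifying that the limit is an admissible magnetization and that both pieces of the energy are lower semicontinuous. First I would note that $E(m^n)\le C$ controls $\int_\Omega|\nabla m^n|^2$, so the sequence $\{m^n\}$ is bounded in $H^1_{loc}(\Omega)$ (the $L^2_{loc}$ bound on $m^n$ itself is automatic since $|m^n|=1$ pointwise). Hence, by a diagonal argument over an exhaustion of $\Omega$ by bounded subdomains together with Rellich--Kondrachov, I can pass to a subsequence (not relabeled) with $\nabla m^n\rightharpoonup G$ weakly in $L^2(\Omega)$ for some $G\in L^2(\Omega;\mathbb R^{3\times 3})$, $m^n\to m^0$ strongly in $L^2_{loc}(\Omega)$ and pointwise a.e., and $m^n\rightharpoonup m^0$ weakly in $H^1_{loc}$. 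Identifying distributional limits gives $G=\nabla m^0$, proving (i) and (ii). The pointwise a.e.\ convergence and $|m^n|=1$ give $|m^0|=1$ a.e., so $m^0\colon\Omega\to\mathbb S^2$.

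For (iii) I would treat the exchange and magnetostatic terms separately. The exchange term $\int_\Omega|\nabla m|^2$ is (sequentially) weakly lower semicontinuous on $L^2$, so $\int_\Omega|\nabla m^0|^2\le\liminf\int_\Omega|\nabla m^n|^2$ is immediate from (i). For the magnetostatic term $\int_{\mathbb R^3}|\nabla u^n|^2$, where $\triangle u^n=\operatorname{div}m^n$ in $\mathbb R^3$ (with $m^n$ extended by zero outside $\Omega$), the key point is that the map $m\mapsto \nabla u$ is linear and bounded from $L^2(\mathbb R^3;\mathbb R^3)$ to $L^2(\mathbb R^3;\mathbb R^3)$: indeed $\int_{\mathbb R^3}|\nabla u|^2=\int_{\mathbb R^3}\nabla u\cdot(-m)\le\|m\|_{L^2}\|\nabla u\|_{L^2}$, so $\|\nabla u\|_{L^2}\le\|m\|_{L^2}$ (this is the standard "stray-field energy is bounded by the $L^2$ norm of the magnetization" estimate). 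A bounded linear operator is weak-to-weak continuous, so it suffices to know $m^n\rightharpoonup m^0$ weakly in $L^2(\mathbb R^3)$; then $\nabla u^n\rightharpoonup \nabla u^0$ weakly in $L^2(\mathbb R^3)$ and weak lower semicontinuity of the $L^2$ norm gives $\int_{\mathbb R^3}|\nabla u^0|^2\le\liminf\int_{\mathbb R^3}|\nabla u^n|^2$. Adding the two lower-semicontinuity inequalities yields (iii).

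The one point that needs a little care — and which I expect to be the main obstacle — is justifying that $m^n\rightharpoonup m^0$ weakly in $L^2(\mathbb R^3)$ globally, since a priori we only have strong $L^2_{loc}$ convergence inside $\Omega$ and the tube $\Omega=\mathbb R\times\omega$ is unbounded. Because $|m^n|\le 1$ and the $m^n$ all vanish outside the fixed tube $\Omega$, the sequence $\{m^n\}$ is bounded in $L^\infty(\mathbb R^3)$ but not in $L^2(\mathbb R^3)$; however, for the magnetostatic term it is enough to work with the difference $m^n-\bar e$-type truncations or, more simply, to observe that $m^n$ is bounded in $L^2$ on every slab $(-R,R)\times\omega$ and that the a.e.\ pointwise limit together with the uniform bound $|m^n|\le\mathbf 1_\Omega$ gives, via dominated convergence on each slab and a diagonal extraction, weak $L^2_{loc}(\mathbb R^3)$ convergence; the stray-field operator is local enough (its kernel decays) that weak $L^2_{loc}$ convergence plus the uniform $L^2$-on-slabs bound suffices to pass to the limit in $\int_{\mathbb R^3}|\nabla u^n|^2$. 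Alternatively, and perhaps cleanest, I would split $m^n=(m^n-e^n)+e^n$ where $e^n$ is a fixed reference profile (the natural global competitor) so that $m^n-e^n$ is genuinely bounded in $L^2(\mathbb R^3)$; linearity of $m\mapsto\nabla u$ then reduces everything to weak $L^2$ convergence of $m^n-e^n$, which follows from the $H^1_{loc}$ bound plus the energy finiteness. I would carry out this splitting explicitly to make the global weak convergence rigorous, and the rest is then routine.
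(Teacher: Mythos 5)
Parts (i), (ii) and the lower semicontinuity of the exchange term are fine and coincide with the paper's argument. The gap is in your treatment of the magnetostatic term. Your main route requires $m^n\rightharpoonup m^0$ weakly in $L^2(\mathbb R^3)$, and the two patches you offer for the lack of a global $L^2$ bound do not work. The splitting $m^n=(m^n-\bar e)+\bar e$ fails because the hypothesis of the lemma is only $E(m^n)\le C$: finite energy does not give $m^n-\bar e\in L^2(\Omega)$ at all (the constant magnetization $m\equiv(1,0,0)$ has zero energy but $m-\bar e\notin L^2$), the lemma does not assume $m^n\in\tilde A(\Omega)$, and even for the minimizing sequences in $\tilde A(\Omega)$ to which the lemma is applied there is no uniform bound on $\|m^n-\bar e\|_{L^2}$ (translating the wall makes this norm blow up — precisely the phenomenon the existence proof has to fight). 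So the claim that boundedness of $m^n-e^n$ in $L^2(\mathbb R^3)$ ``follows from the $H^1_{loc}$ bound plus the energy finiteness'' is false. The alternative patch — that the stray-field operator is ``local enough'' for weak $L^2_{loc}$ convergence of $m^n$ to suffice — is not a proof: the map $m\mapsto\nabla u$ is a genuinely nonlocal zeroth-order singular integral operator, and weak $L^2_{loc}$ convergence of the magnetizations does not by itself yield convergence of the stray fields.

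The missing idea, which is how the paper proceeds, is to take weak limits of the stray fields themselves rather than of the magnetizations: $\|\nabla u_n\|_{L^2(\mathbb R^3)}^2\le E(m^n)\le C$, so $\nabla u_n\rightharpoonup g$ in $L^2(\mathbb R^3)$ along a subsequence. One then passes to the limit in the weak formulation $\int_\Omega m^n\cdot\nabla\varphi=\int_{\mathbb R^3}\nabla u_n\cdot\nabla\varphi$ for $\varphi\in C_0^\infty(\mathbb R^3)$, which needs only the strong $L^2_{loc}$ convergence of $m^n$ because $\varphi$ has compact support; this gives $\int_\Omega m^0\cdot\nabla\varphi=\int_{\mathbb R^3}g\cdot\nabla\varphi$ for all test functions. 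Since $g$ need not a priori be the stray field of $m^0$, one takes its Helmholtz projection onto gradient fields (equivalently, uses that gradients form a weakly closed subspace), obtaining $\nabla u_0$ with $\|\nabla u_0\|_{L^2}\le\|g\|_{L^2}\le\liminf_n\|\nabla u_n\|_{L^2}$ and with $u_0$ a weak solution of $\triangle u=\mathrm{div}\,m^0$; adding the exchange inequality gives (iii). Your proposal would need to be rewritten along these lines; as it stands the global weak convergence of $m^n$ it relies on is unavailable and cannot be restored under the stated hypotheses.
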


\begin{proof}

Let $u_n$ be a weak solution of $\triangle u=\mathrm{div}m^n$. From $\int_{\Omega}|\nabla m^n|^2+\int_{\mathbb R^3}|\nabla u^n|^2\leq C$ we get by a standard compactness argument that,
 $\nabla m^n\rightharpoonup \nabla m^0$ in $ L^2(\Omega),$ $\nabla u_n\rightharpoonup g$ in $L^2(\mathbb{R}^3)$ and $m^n\rightarrow m^0$ in $L_{loc}^2(\Omega),$ for the same subsequence (not relabeled) of $\{\nabla m^n\}$ and $\{\nabla u_n\}$ and some $f\in L^2(\Omega)$ and $ g\in L^2(\mathbb{R}^3).$ We extend $m^0$ outside $\Omega$ as zero. The identities
 $$\int_{\Omega} m^n\cdot \nabla\varphi=\int_{\mathbb{R}^3} \nabla u_n\cdot \nabla\varphi\quad \text{for all}\quad n\in\mathbb N\quad\text{and}\quad\varphi\in C_0^\infty(\mathbb R^3),$$
 will then yield
$$\int_{\Omega} m^0\cdot \nabla\varphi =\int_{\mathbb{R}^3} g\cdot \nabla\varphi\quad \text{for all}\quad  \varphi\in C_0^\infty(\mathbb R^3).$$
Since $g\in L^2(\mathbb{R}^3)$ then the Helmoltz projection of $g$ onto the subspace of gradient fields in $L^2(\mathbb R^3)$ will have
the form $\nabla u_0,$ will satisfy $\|\nabla u_0\|_{L^2(\mathbb{R}^3)}\leq\|g\|_{L^2(\mathbb{R}^3)}$ and will be a weak solution of $\triangle u=\mathrm{div} g$ which is equivalent to
$$\int_{\mathbb{R}^3} g\cdot \nabla\varphi =\int_{\mathbb{R}^3} \nabla u_0\cdot \nabla\varphi  \quad
\text{for all} \quad\varphi\in C_0^{\infty}(\mathbb{R}^3),$$
thus we get
$$\int_{\mathbb{R}^3} m^0\cdot \nabla\varphi=\int_{\mathbb{R}^3} \nabla u_0\cdot \nabla\varphi\quad
\text{for all} \quad \varphi\in C_0^{\infty}(\mathbb{R}^3)$$
 which means that $u_0$ is a weak solution of
 $$\triangle u=\mathrm{div} m^0.$$

  Therefore from the weak convergence $\nabla m^n\rightharpoonup \nabla m^0$ and $\nabla u_n\rightharpoonup g$ we obtain,
\begin{align*}
\|\nabla u_0\|_{L^2(\mathbb{R}^3)}&\leq\|g\|_{L^2(\mathbb{R}^3)}\leq \liminf_{n\to\infty} \|\nabla u_n\|_{L^2(\mathbb{R}^3)}\\
\|\nabla m^0\|_{L^2(\mathbb{R}^3)}&\leq \liminf_{n\to\infty} \|\nabla m^n\|_{L^2(\mathbb{R}^3)}
\end{align*}
which yields $E(m^0)\leq \liminf_{n\to\infty} E(m^n).$

\end{proof}

Now we have enough tolls to prove the existence theorem.\\

\textbf{Proof of Theorem~\ref{th:existence}}. We adopt the direct method of proving an existence of a minimizer. The idea is starting with
any minimizing sequence, we construct another minimizing sequence that has a limit in $\tilde A(\Omega)$ in the sense of Lemma~\ref{lem:compactness}.
Let $\{m^n\}$ be a minimizing sequence, i.e.,
$$\lim_{n\rightarrow \infty}E(m^n)=\inf_{m\in\tilde A(\Omega)}E(m).$$
First of all note that minimization problem (\ref{minimization problem}) is invariant under translations in the $x$ direction, that is if $m\in \tilde A(\Omega)$ then obviously $m_c(x,y,z)=m(x-c,y,z)\in \tilde A(\Omega)$ and $E(m_c)=E(m).$ We have that $|E(m^n)|\leq M$ for some $M$ and for all $n \in \mathbb{N}$. For any $n\in\mathbb{N}$ consider the sets $A_n$, $B_n$ and $C_n$ defined as follows:

\begin{align*}
A_n&=\bigg\{x\in \mathbb{R} \ \ : \ -1\leq \bar m_1^n(x)< -\frac{1}{2}\bigg\}\\
B_n&=\bigg\{x\in \mathbb{R} \ \ : \ -\frac{1}{2}\leq \bar m_1^n(x)\leq \frac{1}{2}\bigg\}\\
C_n&=\bigg\{x\in \mathbb{R} \ \ : \  \frac{1}{2}<\bar m_1^n(x)\leq 1\bigg\}\\
\end{align*}

Since $\bar m_1^n$ is continuous in $\mathbb{R}$ then for all $n \in\mathbb{N}$, $A_n,$ $B_n$ and $C_n$ are a finite or countable union of disjoint intervals. We distinguish two types of intervals in $B_n.$ A composite interval $(a,b)\subset B_n$ is said to be of the first type if $|\bar m_1^n(a)-\bar m_1^n(b)|=1,$  and of the second type otherwise.
By Lemma~\ref{lem:oscilation.preventing} the sum of the lengths of all intervals, as well as the number of the first type intervals in $B_n$ is bounded by a number $s$ depending only on $M$ and $\omega$, i.e., a constant not depending on $n$. Consider two cases:\\
\textbf{CASE1.} \textit{There are no second type intervals in $B_n$ for all $n\in\mathbb{N}.$}\\

Let us paint all the points of $A_n$, $B_n$ and $C_n$ with respectively black, yellow and red color for all $n\in \mathbb{N}$. We call the increasing sequence $\{n_k\}\subset \mathbb N$ "good" if for every $k\in\mathbb{N}$ there exist two intervals $(a_1^k,a_2^k)\subset A_{n_k}$ and
$(c_1^k,c_2^k)\subset C_{n_k}$ such that
$$a_2^k-a_1^k\rightarrow +\infty,\qquad c_2^k-c_1^k\rightarrow +\infty,\qquad 0<c_1^k-a_2^k\leq C$$
for a constant $C$ not depending on $k.$ The endpoints $a_1^k$ and $c_2^k$ can also take values
$-\infty$ and $+\infty$ respectively. If $\{n_k\}$ is "good", the subsequence $\{m^{n_k}\}$ will also be called "good". We show, that any minimizing sequence $\{m^n\}\subset\tilde A(\Omega)$ can be translated in the $x$ coordinate such that the new sequence contains a "good" subsequence. For every fixed $n$ there are some black, yellow and red intervals between $(-\infty, a_n)$ and $(c_n,+\infty)$. Note that there is obviously at least one yellow interval between any two black and any two red ones, thus the number of both black and yellow intervals is at most $s+1$, hence the number of all intervals in the $n$-th family is bounded by the same number $S=3s+2$ for all $n.$ Let us number both the red and the black intervals in any family of intervals. Let us prove the proposition below, which is a reformulation of our problem:\\

\textbf{Proposition.} Assume a sequence of natural numbers $l_n$ and a sequence of families of $l_n$ disjoint intervals on the real line pained with black and red color are given for all $n\in\mathbb{N}$. Assume $l_n\leq l$ and the sum of the lengths of $l_n-1$ gaps between the intervals in the $n$-th family is bounded by the same number $M$ for all $n$. Assume furthermore that for any $n,$ the far left placed interval is black and the far right placed interval is red and their lengths tend to $\infty$ as $n$ goes to infinity. Then there exists a subsequence $\{n_k\}$ and two associated intervals $(a_1^k, a_2^k)$ and
 $(c_1^k, c_2^k)$ in the $n_k$-th family such that $(a_1^k, a_2^k)$ is black, $(c_1^k, c_2^k)$ is red, and

\begin{equation}
a_2^k-a_1^k \rightarrow +\infty,\qquad c_2^k-c_1^k \rightarrow +\infty \qquad 0<c_1^k-a_2^k\leq M_1
\end{equation}
for a constant $M_1$ and all $k\in \mathbb{N}$.\\

\textbf{Proof of proposition.} The case  $l=2$ is evident. Assume that the proposition is true for $l\leq N$ and let us prove it for $l=N+1$. Since $l\geq 3$, in every family there are at least two intervals of the same color. Assume that for infinitely many indices $n$ there are at least two black intervals in the $n$-th family. Consider the far right placed black intervals for all such families. There are two possible cases:\\
 \textbf{Case 1.} \textit{For a subsequence their lengths tend to $+\infty$}.\\
 In this case we can omit all the intervals placed on their left side which leads to a situation with less intervals in every family (in such a subsequence) fulfilling the requirements of the proposition, so by induction the existence of a "good" subsequence is proven.\\
 \textbf{Case 2.} \textit{Their lengths are bounded by the same constant.}\\
 In this case we can remove this intervals and this will lead us to a situation with less intervals in all families fulfilling the requirements of the statements so by the induction the existence of a "good" subsequence is proven.\\

 Let us get now back to our situation. If we remove all the yellow intervals from the real line for all $n\in \mathbb{N}$ then the families of the black and the red intervals fulfill the requirements of the proposition, thus the existence of a "good" sequence is proven. Take the two intervals $[a_1^k,a_2^k]$ and $[c_1^k,c_2^k]$ for all $k\in\mathbb{N}$ and denote the the "good" subsequence of magnetizations again by $\{m^k\}$ which will also be a minimizing sequence. Let us translate $m^k$ by a factor of $a_2^k$ and denote
 $$m_{good}^k(x,y,z)=m^k(x+a_2^k,y,z).$$
Then $\{m_{good}^k\}$ is a minimizing sequence and furthermore denoting $a_3^k=a_2^k-a_1^k$, $c_3^k=c_1^k-a_2^k$ and $c_4^k=c_2^k-a_2^k,$ we obtain,
\begin{align}
\label{conditions.good1}
 &\bar m_{good}^k(x)\leq -\frac{1}{2} \quad\text{for} \quad x \in [-a_3^k,0] \quad \text{and} \quad\bar m_{good}^k(x)\geq \frac{1}{2} \quad\text{for}\quad x \in [c_3^k,c_4^k],\\
&a_3^k\rightarrow \infty,\qquad c_4^k-c_3^k\rightarrow \infty,\qquad 0<c_3^k<M_1.
\label{conditions.good2}
  \end{align}

Owing to Lemma~\ref{lem:compactness} one can extract a subsequence from $\{m_{good}^k\}$ (not relabeled) with a limit $m^0\in A(\Omega).$
Let us now prove that conditions (\ref{conditions.good1}) and (\ref{conditions.good2}) imply that $m^0\in \tilde A(\Omega).$ We have for any fixed $R>0,$

\begin{align*}
\int_{-R}^{R}|\bar m_1^0-\bar  m_{good1}^k|\ud x&=\frac{1}{|\omega|}\int_{-R}^{R}\bigg|\int_{\omega}( m_1^0-m_{good1}^k)\ud y\ud z\bigg|\ud x\\
&\leq\frac{1}{|\omega|}\int_{-R}^{R}\int_{ \omega}| m_1^0-m_{good1}^k|\ud y\ud z\ud x\\
&\leq\frac{1}{|\omega|}\Bigg(2R|\omega|\cdot\int_{[-R,R]\times \omega}|m_1^0-m_{good1}^k|^2\ud\xi\Bigg)^{\frac{1}{2}}\\
&=\sqrt{\frac{2R}{|\omega|}}\cdot \|m_1^0-m_{good1}^k\|_{L^2([-R,R]\times \omega)}\rightarrow 0
\end{align*}
as $k\to\infty$ because of the strong convergence $m_{good}^k\rightarrow  m^0$  in $L_{loc}^2(\Omega)$. Therefore a subsequence of $\{\bar m_{good1}^k(x)\}$ converges pointwise to $\bar m_1^0(x)$ almost everywhere in $[-R,R].$ Giving $R$ all natural values and applying a diagonal argument we establish that a subsequence of $\{\bar m_{good1}^k(x)\}$ converges pointwise to $\bar m_1^0(x)$ almost everywhere in $\mathbb{R},$ therefore
\begin{equation}
\label{cond.m0}
\bar m_1^0(x)\leq -\frac{1}{2}\quad\text{ a.e. in}\quad (-\infty,0)\quad\text{ and}\quad \bar m_1^0(x)\geq\frac{1}{2} \quad\text{ a.e. in}\quad  [M_1,+\infty)
\end{equation}
Let us now show that conditions $E(m^0)<\infty$ and (\ref{cond.m0}) imply $m^0\in \tilde A(\Omega).$ We have by the triangle inequality
$$\|\nabla (m^0-\bar e)\|_{L^2(\Omega)}^2\leq 2\|\nabla m^0\|_{L^2(\Omega)}^2+2\|\nabla \bar e\|_{L^2(\Omega)}^2\leq 2E(m^0)+4|\omega|<\infty, $$
thus it remains to prove that $m^0-\bar e\in L^2(\Omega)$. We have again by the triangle inequality and by Lemma~\ref{lem:ineq.m.bar.m},
\begin{align*}
 \|m^0-\bar e\|_{L^2(\Omega)}^2&\leq 2\|\bar m^0-\bar e\|_{L^2(\Omega)}^2+2\|m^0-\bar m^0\|_{L^2(\Omega)}^2\\
 &\leq  2C_pd^2\|\nabla m^0\|_{L^2(\Omega)}^2+2\|\bar m^0-\bar e\|_{L^2(\Omega)}^2\\
 &\leq 2C_pd^2E(m^0)+2\|\bar m^0-\bar e\|_{L^2(\Omega)}^2,
 \end{align*}
 thus it remains to prove that $\bar m^0-\bar e\in L^2(\Omega)$. One can assume without loss of generality that $M_1\geq1$ in (\ref{cond.m0}). We calculate,
\begin{align*}
\int_\Omega |\bar m^0-\bar e|^2=\int_{[-1,M_1]\times\omega}|\bar m^0-\bar e|^2+\int_{[-\infty,-1]\times\omega}|\bar m^0-\bar e|^2+\int_{[M_1,\infty]\times\omega}|\bar m^0-\bar e|^2=I_1+I_2+I_3.
\end{align*}
The estimation of $I_1,$ $I_2$ and $I_3$ is straightforward:

$$I_1\leq 4(1+M_1)|\omega|.$$
Due to condition (\ref{cond.m0}) and Lemma~\ref{lem:ineq.m.bar.m} we have,
\begin{align*}
I_2&=\int_{[-\infty,-1]\times\omega}(1+|\bar m^0|^2+2\bar m_1^0)\\
&=2\int_{[-\infty,-1]\times\omega}(1+\bar m_1^0)+\int_{[-\infty,-1]\times\omega}(|m^0|^2-|\bar m^0|^2)\\
&\leq 2\int_{[-\infty,-1]\times\omega}(1+\bar m_1^0)(1-\bar m_1^0)+C_pd^2\int_{[-\infty,-1]\times\omega}|\nabla m^0|^2\\
&=2\int_{[-\infty,-1]\times\omega}(|m^0|^2-|\bar m^0|^2)+2\int_{[-\infty,-1]\times\omega}(|\bar m_2^0|^2+|\bar m_3^0|^2)+C_pd^2\int_{[-\infty,-1]\times\omega}|\nabla m^0|^2\\
&\leq 3C_pd^2\int_{[-\infty,-1]\times\omega}|\nabla m^0|^2+2\int_{[-\infty,-1]\times\omega}(|\bar m_2^0|^2+|\bar m_3^0|^2).
\end{align*}
Analogues analysis for $I_3$ gives
 $$I_3\leq3C_pd^2\int_{[M_1,\infty]\times\omega}|\nabla m^0|^2+2\int_{[M_1,\infty]\times\omega}(|\bar m_2^0|^2+|\bar m_3^0|^2).$$
Therefore combining the estimates for $I_1$ $I_2$ and $I_3$ and taking into account Lemma~\ref{lem:m2.m.3.bdd.E} we discover $I_1+I_2+I_3<\infty$
as wished. CASE1 is now established.\\

\textbf{CASE2.} \textit{There are some second type intervals in $B_n$ for some $n.$}\\

Removing all the second type yellow intervals from the real line we can regard the rest as a real line without gaps simply by shifting all the intervals to the left hand side such that after that operation no overlap occurs and there is no gap left. Precisely, we shift each interval to the left by a factor equal to the sum of the lengths of the gaps between that interval and $-\infty.$ During that operation we unify the black and red intervals with the neighboring intervals of the same color but we regard the possible neighboring first type yellow intervals as separate. We get a situation like in CASE1 and therefore we can prove the existence of a "good" subsequence. It is easy to show that since that sum of the lengths of the second type yellow intervals in each family is bounded by the same constant then the in Lemma~\ref{lem:compactness} described limit of the obtained "good" subsequence will belong to $\tilde A(\Omega)$ and
hence will be an energy minimizer in $\tilde A(\Omega)$. The proof is complete now.

\section{The stability of minimizers}

Throughout this section we will consider a sequence of domain-magnetization-energy triples $(\Omega_n, m^n,E(m^n))_{n\in\mathbb{N}}$ such that $\Omega_n=\mathbb R\times(d_n\cdot\omega),$ $m^n\in\tilde A(\Omega_n),$  $d_n\to 0$ and $\lim_{n\to\infty}\frac{E(m^n)}{d_n^2}=\min_{m\in A_0}E_0(m),$
i.e., $\{m^n\}$ is a sequence of almost minimizers. Assume furthermore that $\omega$ has 180 degree rotational symmetry and the matrix $M_\omega$ has three different eigenvalues, i.e., $\alpha_2\neq\alpha_3,$ hence one can assume without loss of generality, that $\alpha_2<\alpha_3.$ Note that due to (\ref{almost.min}) we have
\begin{equation}
\label{E(m^n)is.bdd}
E(m^n)\leq Cd_n^2\quad\text{for all}\quad n.
\end{equation}

\begin{Lemma}

\label{morms of avrages converges to the norm of limit}
If $\{\acute m^n\}$ converges to some $m^0(x)\in\tilde A(\Omega)$ in the sense of Definition~\ref{notion of convergence}, then

\begin{itemize}

\item[(i)] $ \lim_{n\to\infty}\|\nabla \acute{\bar m}^n\|_{L^2(\Omega)}=\|\nabla m^0\|_{L^2(\Omega)},$
\item[(ii)] $ \lim_{n\to\infty}\|\acute{\bar m}_2^n\|_{L^2(\Omega)}=\|m_2^0\|_{L^2(\Omega)},\qquad  \lim_{n\to\infty}\|\acute{\bar m}_3^n\|_{L^2(\Omega)}=\|m_3^0\|_{L^2(\Omega)}.$

\end{itemize}
\end{Lemma}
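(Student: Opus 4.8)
The statement to prove (Lemma~\ref{morms of avrages converges to the norm of limit}) is that if $\{\acute m^n\}$ converges to $m^0$ in the sense of Definition~\ref{notion of convergence}, then the $L^2$ norms of the \emph{averaged} quantities $\acute{\bar m}^n$ converge to the corresponding norms of $m^0$. My plan is to combine the two ingredients already available: (a) lower-semicontinuity/weak-convergence information coming from Definition~\ref{notion of convergence}, and (b) matching upper bounds coming from the fact that the sequence is a sequence of almost minimizers, so its energy converges to $\min_{A_0} E_0$, which by the $\Gamma$-convergence picture of \eqref{linit.energy}--\eqref{conv.energies} equals exactly $|\omega|\int|\partial_x m^0|^2 + \int m^0 M_\omega (m^0)^T$ when $m^0$ is a minimizer.

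First I would record the weak lower bounds. Passing $m^n$ through the averaging operator, one has $\acute{\bar m}^n(x) = \frac{1}{|\omega|}\int_\omega \acute m^n(x,y,z)\,\mathrm dy\,\mathrm dz$ (up to the normalization convention used in the paper). Since $\nabla\acute m^n\rightharpoonup\nabla m^0$ weakly in $L^2(\Omega)$ and $m^0$ depends only on $x$, the averaged gradients $\nabla\acute{\bar m}^n$ also converge weakly in $L^2$ to $\nabla m^0$; likewise $\acute{\bar m}_2^n\rightharpoonup m_2^0$ and $\acute{\bar m}_3^n\rightharpoonup m_3^0$ (for the components, weak $L^2_{loc}$ convergence of $\acute m^n$ plus the uniform bound $\|\acute{\bar m}_j^n\|_{L^2}\le C$ from Lemma~\ref{lem:m2.m.3.bdd.E} and \eqref{E(m^n)is.bdd} upgrades to weak $L^2(\Omega)$ convergence). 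Weak lower semicontinuity of the norm then gives
\begin{equation*}
\|\nabla m^0\|_{L^2}\le\liminf_n\|\nabla\acute{\bar m}^n\|_{L^2},\qquad \|m_j^0\|_{L^2}\le\liminf_n\|\acute{\bar m}_j^n\|_{L^2}\ \ (j=2,3).
\end{equation*}
These are three separate $\liminf$ inequalities.

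Next I would produce the reverse ($\limsup$) inequalities, all at once, by an energy-balance argument. By \eqref{lower.bound.E},
\begin{equation*}
\frac{E(m^n)}{d_n^2}\ge \|\nabla\acute m^n\|_{L^2(\Omega)}^2+\alpha_2\|\acute{\bar m}_2^n\|_{L^2(\mathbb R)}^2+\alpha_3\|\acute{\bar m}_3^n\|_{L^2(\mathbb R)}^2+o(1),
\end{equation*}
and $\|\nabla\acute m^n\|^2\ge\|\nabla\acute{\bar m}^n\|^2$ because averaging over $\omega$ is an $L^2$-contraction (Jensen). Since $\{m^n\}$ is a sequence of almost minimizers, the left side tends to $\min_{A_0}E_0 = |\omega|\,\|\partial_x m^0\|_{L^2(\mathbb R)}^2+\alpha_2\|m_2^0\|_{L^2(\mathbb R)}^2+\alpha_3\|m_3^0\|_{L^2(\mathbb R)}^2$ — here I use that $m^0$, being the limit of almost minimizers, is itself a minimizer of $E_0$ (this follows from Definition~\ref{notion of convergence} and property (ii) after \eqref{conv.energies}; note the norm conventions relating $\int_\Omega$ and $\int_{\mathbb R}$ involve factors of $|\omega|$ from the $y,z$ integration which I must track carefully). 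Taking $\limsup$ and subtracting the three already-established $\liminf$ bounds (each multiplied by its positive weight $|\omega|,\alpha_2,\alpha_3$, all strictly positive), I get $0\ge \limsup(\text{sum of nonnegative deficits})$, which forces each deficit to zero — i.e., each $\liminf$ is actually a $\limsup$ and a genuine limit. That is exactly (i) and (ii).

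The main obstacle I anticipate is bookkeeping rather than conceptual: reconciling the several normalization conventions in play — the definition $\bar m(x)=\int_\omega m$ (not divided by $|\omega|$) used in Section 3 versus the $\frac{1}{|\omega|}\int_\omega$ average appearing in the proof of Theorem~\ref{th:existence}, the rescaling $\acute m(x,y,z)=m(x,dy,dz)$, and the fact that $E_0$ in \eqref{linit.energy} is written with $\int_{\mathbb R}$ while the lemma's norms are $\|\cdot\|_{L^2(\Omega)}$; an inconsistency in any factor of $|\omega|$ or $d_n$ would break the exact cancellation that the argument relies on. A secondary point requiring care is justifying that $\acute{\bar m}_j^n\rightharpoonup m_j^0$ \emph{weakly in $L^2(\Omega)$} and not merely in $L^2_{loc}$: this needs the uniform $L^2(\Omega)$ bound from Lemma~\ref{lem:m2.m.3.bdd.E}, which holds precisely because $E(m^n)\le C d_n^2$; I would state this explicitly. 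Everything else — Jensen's inequality for the average, weak lower semicontinuity of the norm, and the "$\liminf + \limsup$ with the right signs forces a limit" trick — is standard.
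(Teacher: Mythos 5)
Your proposal is correct and follows essentially the same route as the paper: lower semicontinuity (plus Lemma~\ref{lem:ineq.m.bar.m} and the bound $E(m^n)\le Cd_n^2$ to pass between $\acute m^n$ and $\acute{\bar m}^n$) gives the three $\liminf$ inequalities, and the matching $\limsup$ bounds come from combining \eqref{lower.bound.E} with the almost-minimizer property \eqref{almost.min}; the paper phrases this last step as a contradiction over the three terms, while you subtract the $\liminf$s directly, which is the same argument. The only cosmetic difference is that the paper needs merely $E_0(m^0)\ge\min_{A_0}E_0$ (admissibility of $m^0$, which you should note follows from $m^0\in\tilde A(\Omega)$ and Remark~\ref{rem:lim.bar.m1=1}) rather than full minimality of $m^0$.
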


\begin{proof}

The inequality $ \liminf_{n\to\infty}\|\nabla \acute{\bar m}^n\|_{L^2(\Omega)}\geq\|\nabla m^0\|_{L^2(\Omega)}$ is trivial, while the inequality
$ \liminf_{n\to\infty}\|\acute{m}_2^n\|_{L^2(\Omega)}\geq\|m_2^0\|_{L^2(\Omega)}$ follows from the convergence $m_2^n\to m_2^0$ in $L_{loc}^2(\Omega).$
We have furthermore by Lemma~\ref{lem:ineq.m.bar.m} and by (\ref{E(m^n)is.bdd}) that,
\begin{align*}
\|\acute m_2^n-\acute{ \bar m}_2^n\|_{L^2(\Omega)}^2&=\frac{1}{d_n^2}\|m_2^n-\bar m_2^n\|_{L^2(\Omega_n)}\\
&\leq C_p\|\nabla m^n\|_{L^2(\Omega_n)}^2\\
&\leq  C_pCd_n^2,
\end{align*}
thus
\begin{equation}
\label{acute.acute.bar}
\|\acute m_2^n-\acute{ \bar m}_2^n\|_{L^2(\Omega)}\to 0.
\end{equation}
Therefore we get $ \liminf_{n\to\infty}\|\acute{\bar m}_2^n\|_{L^2(\Omega)}\geq\|m_2^0\|_{L^2(\Omega)}$ and a similar inequality for $m_3$
is also fulfilled. It remains to only show the opposite inequalities with $\limsup.$ It is clear that
$\|\nabla \acute{\bar m}^n\|_{L^2(\Omega)}\leq \|\nabla \acute{m}^n\|_{L^2(\Omega)},$ thus it suffices to prove that
$ \limsup_{n\to\infty}\|\nabla \acute{ m}^n\|_{L^2(\Omega)}\leq\|\nabla m^0\|_{L^2(\Omega)}.$
Assume now in contradiction that one of the three inequalities with $\limsup$, we intend to prove, fails. Therefore we have owing to (\ref{lower.bound.E}), that for some $\delta>0$ there holds,
\begin{align*}
\limsup_{n\to\infty}\frac{E(m^n)}{d_n^2}&\geq\max\bigg(\limsup_{n\to\infty}\|\nabla \acute m^n\|_{L^2(\Omega)}^2+\liminf_{n\to\infty}
\alpha_2\|\bar m_2^n\|_{L^2(\mathbb R)}^2+\liminf_{n\to\infty}\alpha_3\|\bar m_3^n\|_{L^2(\mathbb R)}^2,\\
 &\liminf_{n\to\infty}\|\nabla \acute m^n\|_{L^2(\Omega)}^2+\limsup_{n\to\infty}
\alpha_2\|\bar m_2^n\|_{L^2(\mathbb R)}^2+\liminf_{n\to\infty}\alpha_3\|\bar m_3^n\|_{L^2(\mathbb R)}^2,\\
 &\liminf_{n\to\infty}\|\nabla \acute m^n\|_{L^2(\Omega)}^2+\liminf_{n\to\infty}
\alpha_2\|\bar m_2^n\|_{L^2(\mathbb R)}^2+\limsup_{n\to\infty}\alpha_3\|\bar m_3^n\|_{L^2(\mathbb R)}^2\bigg)\\
 &\geq E_0(m^0)+\delta\\
 &\geq \min_{m\in A_0}E_0(m)+\delta,
\end{align*}
which contradicts (\ref{almost.min}). The lemma is proved now.

\end{proof}

\begin{Corollary}
\label{norms convergs to the norm of the limit}
Let $\{m^n\}$ and $m^0$ be as in Lemma \ref{morms of avrages converges to the norm of limit}. Then
\begin{itemize}
\item[(i)] $ \lim_{n\to\infty}\|\acute{m}_2^n\|_{L^2(\Omega)}=\|m_2^0\|_{L^2(\Omega)},\qquad\lim_{n\to\infty}\|\acute{m}_3^n\|_{L^2(\Omega)}=\|m_3^0\|_{L^2(\Omega)}.$
\end{itemize}
\end{Corollary}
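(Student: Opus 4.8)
The plan is to reduce the statement about $\acute m_j^n$ to the statement about the averages $\acute{\bar m}_j^n$, which was already established in Lemma~\ref{morms of avrages converges to the norm of limit}(ii), using the fact that $\acute m_j^n$ and $\acute{\bar m}_j^n$ differ by something whose $L^2$ norm tends to $0$. Concretely, for $j=2$ the estimate (\ref{acute.acute.bar}) already gives $\|\acute m_2^n-\acute{\bar m}_2^n\|_{L^2(\Omega)}\to 0$; for $j=3$ the identical computation — apply Lemma~\ref{lem:ineq.m.bar.m} componentwise and then use the energy bound (\ref{E(m^n)is.bdd}) — yields $\|\acute m_3^n-\acute{\bar m}_3^n\|_{L^2(\Omega)}^2\le C_pC d_n^2\to 0$, so that $\|\acute m_3^n-\acute{\bar m}_3^n\|_{L^2(\Omega)}\to 0$ as well.

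Next I would invoke the reverse triangle inequality in $L^2(\Omega)$:
$$\bigl|\,\|\acute m_j^n\|_{L^2(\Omega)}-\|\acute{\bar m}_j^n\|_{L^2(\Omega)}\,\bigr|\le \|\acute m_j^n-\acute{\bar m}_j^n\|_{L^2(\Omega)}\qquad (j=2,3).$$
Since the right-hand side tends to $0$ and, by Lemma~\ref{morms of avrages converges to the norm of limit}(ii), $\|\acute{\bar m}_j^n\|_{L^2(\Omega)}\to\|m_j^0\|_{L^2(\Omega)}$, it follows that $\|\acute m_j^n\|_{L^2(\Omega)}\to\|m_j^0\|_{L^2(\Omega)}$ for $j=2,3$, which is exactly the claim.

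There is no real obstacle here: the corollary is a direct consequence of Lemma~\ref{morms of avrages converges to the norm of limit} together with the Poincaré-type estimate of Lemma~\ref{lem:ineq.m.bar.m} and the uniform energy bound (\ref{E(m^n)is.bdd}); the only point worth spelling out is that the bound (\ref{acute.acute.bar}), stated there for the second component, holds verbatim for the third component by the same argument.
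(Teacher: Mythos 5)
Your proposal is correct and follows essentially the same route as the paper: the paper's one-line proof also derives the corollary from Lemma~\ref{morms of avrages converges to the norm of limit}(ii) together with the vanishing of $\|\acute m_j^n-\acute{\bar m}_j^n\|_{L^2(\Omega)}$ from (\ref{acute.acute.bar}) (and its identical analogue for the third component), concluded by the triangle inequality. Your write-up simply spells out what the paper leaves implicit, including the remark that the bound for $j=3$ is obtained verbatim as for $j=2$.
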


\begin{proof}
It follows from Lemma~\ref{morms of avrages converges to the norm of limit} and equality (\ref{acute.acute.bar}).
\end{proof}

\begin{Lemma}
\label{strong convergence1}

Let $\{m^n\}$ and $m^0$ be as in Lemma~\ref{morms of avrages converges to the norm of limit}. Then
\begin{itemize}

\item[(i)]$\lim_{n\to\infty}\|\nabla \acute m^n-\nabla m^0\|_{L^2(\Omega)}=0$

\item[(ii)] $\lim_{n\to\infty}\|\acute m_2^n- m_2^0\|_{L^2(\Omega)}=0, \qquad
\lim_{n\to\infty}\|\acute m_3^n- m_3^0\|_{L^2(\Omega)}=0.$

\end{itemize}

\end{Lemma}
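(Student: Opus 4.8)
The plan is to deduce both strong convergences from the elementary Hilbert-space fact that, in $L^2$, weak convergence together with convergence of the norms implies strong convergence: if $f_n\rightharpoonup f$ and $\|f_n\|\to\|f\|$, then
$$\|f_n-f\|^2=\|f_n\|^2-2\langle f_n,f\rangle+\|f\|^2\to\|f\|^2-2\|f\|^2+\|f\|^2=0.$$
Thus for each of the three sequences $\nabla\acute m^n$, $\acute m_2^n$, $\acute m_3^n$ it suffices to verify (a) weak convergence in $L^2(\Omega)$ to the corresponding limit, and (b) convergence of the $L^2(\Omega)$ norms to the norm of that limit.

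For part (i), weak convergence $\nabla\acute m^n\rightharpoonup\nabla m^0$ in $L^2(\Omega)$ is precisely hypothesis (i) of Definition~\ref{notion of convergence}. For the norms, weak lower semicontinuity gives $\liminf_{n\to\infty}\|\nabla\acute m^n\|_{L^2(\Omega)}\geq\|\nabla m^0\|_{L^2(\Omega)}$, while the reverse bound $\limsup_{n\to\infty}\|\nabla\acute m^n\|_{L^2(\Omega)}\leq\|\nabla m^0\|_{L^2(\Omega)}$ is exactly the estimate established inside the proof of Lemma~\ref{morms of avrages converges to the norm of limit} (the contradiction argument there rules out failure of this $\limsup$ bound for $\acute m^n$ itself, not only for $\acute{\bar m}^n$). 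Hence $\|\nabla\acute m^n\|_{L^2(\Omega)}\to\|\nabla m^0\|_{L^2(\Omega)}$ and the displayed identity yields (i).

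For part (ii), first note that since $M_\omega^1$ is positive definite and $E_0(m^0)<\infty$, the quadratic term $\int_{\mathbb R}m^0 M_\omega (m^0)^T\ud x$ controls $\int_{\mathbb R}(|m_2^0|^2+|m_3^0|^2)\ud x$, so $m_2^0,m_3^0\in L^2(\mathbb R)$ and, regarded as functions independent of $y,z$, lie in $L^2(\Omega)$. Next, by (\ref{acute.acute.bar}) together with Lemma~\ref{lem:m2.m.3.bdd.E} the sequences $\{\acute m_2^n\}$ and $\{\acute m_3^n\}$ are bounded in $L^2(\Omega)$; hence every subsequence admits a further subsequence converging weakly in $L^2(\Omega)$, and the strong $L^2_{loc}(\Omega)$ convergence of Definition~\ref{notion of convergence}(ii) forces that weak limit to coincide with $m_2^0$ (resp. $m_3^0$). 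As the weak limit is independent of the chosen subsequence, the full sequences satisfy $\acute m_2^n\rightharpoonup m_2^0$ and $\acute m_3^n\rightharpoonup m_3^0$ in $L^2(\Omega)$; convergence of the corresponding norms is Corollary~\ref{norms convergs to the norm of the limit}, so the same identity gives (ii).

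The only point that is not pure bookkeeping is the upgrade of the merely local $L^2$ convergence of the transverse components $\acute m_2^n,\acute m_3^n$ to genuine weak convergence in the whole space $L^2(\Omega)$, which is where the uniform $L^2(\Omega)$ bound from (\ref{acute.acute.bar}) and Lemma~\ref{lem:m2.m.3.bdd.E} is used; everything else reduces to the two-line identity above.
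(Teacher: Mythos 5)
Your proof is correct. Part (i) is exactly the paper's argument: weak convergence of the gradients from Definition~\ref{notion of convergence}, the $\limsup$ bound $\limsup_n\|\nabla\acute m^n\|_{L^2(\Omega)}\leq\|\nabla m^0\|_{L^2(\Omega)}$ extracted from the proof of Lemma~\ref{morms of avrages converges to the norm of limit}, and then weak convergence plus norm convergence in $L^2$. For part (ii) you take a slightly different route from the paper: the paper never establishes weak convergence of $\acute m_2^n,\acute m_3^n$ in all of $L^2(\Omega)$; instead it splits $\Omega$ into $[-l_2,l_2]\times\omega$ and its complement, kills the compact part by the strong $L^2_{loc}$ convergence, and bounds the tail by $2\limsup_n\int(|\acute m_2^n|^2+|m_2^0|^2)$ over the tail, which the norm convergence of Corollary~\ref{norms convergs to the norm of the limit} reduces to $4|\omega|\int_{\mathbb R\setminus[-l,l]}|m_2^0|^2$, then lets $l\to\infty$. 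You instead upgrade the local strong convergence to genuine weak convergence in $L^2(\Omega)$ (using the uniform bound from (\ref{acute.acute.bar}) and Lemma~\ref{lem:m2.m.3.bdd.E}, plus the subsequence argument) and then invoke the same Hilbert-space identity as in (i). Both implementations rest on the same two ingredients, Corollary~\ref{norms convergs to the norm of the limit} and the local convergence; yours is more uniform across (i) and (ii) and makes the ``no mass escapes'' mechanism abstract, while the paper's explicit cut-off computation avoids having to identify a weak limit at all. One small remark: the integrability $m_2^0,m_3^0\in L^2$ that you argue through positive definiteness of $M_\omega^1$ is already immediate from the hypothesis $m^0\in\tilde A(\Omega)$ (since $\bar e$ has vanishing second and third components), or alternatively from Corollary~\ref{norms convergs to the norm of the limit} itself, so no appeal to $E_0(m^0)<\infty$ is needed.
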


\begin{proof}

The inequality $\liminf_{n\to\infty}\|\nabla \acute m^n\|_{L^2(\Omega)}\geq \|\nabla  m^0\|_{L^2(\Omega)}$ ia a consequence of the weak convergence $
\nabla \acute m^n\rightharpoonup\nabla m^0.$ The opposite inequality $\limsup_{n\to\infty}\|\nabla \acute m^n\|_{L^2(\Omega)}\leq \|\nabla  m^0\|_{L^2(\Omega)}$ has been proven in the proof of Lemma~\ref{morms of avrages converges to the norm of limit}. Therefore
$\limsup_{n\to\infty}\|\nabla \acute m^n\|_{L^2(\Omega)}=\|\nabla  m^0\|_{L^2(\Omega)}$ which combined with the weak convergence
$\acute m^n \rightharpoonup m^0$ gives $(i)$.
Fix now $l>0.$ We have by virtue of Corollary \ref{norms convergs to the norm of the limit},

\begin{align*}
\limsup_{n\to\infty}\int_{\Omega}|\acute m_2^n- m_2^0|^2&\leq
\limsup_{n\to\infty}\int_{[-l,l]\times\omega}|\acute m_2^n- m_2^0|^2+\limsup_{n\to\infty}\int_{\Omega\setminus([-l,l]\times\omega)}|\acute m_2^n- m_2^0|^2\\
&\leq 2\limsup_{n\to\infty}\int_{\Omega\setminus([-l,l]\times\omega)}\big(|\acute m_2^n|^2+| m_2^0|^2\big)\\
&\leq 2\limsup_{n\to\infty}\int_{\Omega}\big(|\acute m_2^n|^2+|m_2^0|^2\big)-2\liminf_{n\to\infty}\int_{[-l,l]\times\omega}\big(|\acute m_2^n|^2+| m_2^0|^2\big)\\
&=4|\omega|\int_{\mathbb{R}\setminus[-l,l]}| m_2^0(x)|^2\ud x.
\end{align*}
From the arbitrariness of $l$ we get the validity of the first equality in $(ii).$ The proof of the second equality in $(ii)$ is straightforward.

\end{proof}

\begin{Lemma}
\label{strong convergence2}
Let $\{m^n\}$ and $m^0$ be as in Lemma~\ref{morms of avrages converges to the norm of limit}. Assume in addition that for some $N\in\mathbb{N}$ and $l>0$ we have for all $n\geq N$
$$\bar m_1^n(x)\leq 0, \ x\in(-\infty,-l] \ \ \text{and}\ \ \ \bar m_1^n(x)\geq 0, \ x\in[l,+\infty).$$
Then
$$\lim_{n\to\infty}\|\acute m^n-m^0\|_{H^1(\Omega)}=0.$$
\end{Lemma}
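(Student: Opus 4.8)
\emph{Proof idea.} The plan is to reduce everything to an $L^2$-bound on the first component and then treat the tails by hand. By Lemma~\ref{strong convergence1} we already have $\nabla\acute m^n\to\nabla m^0$ in $L^2(\Omega)$ and $\acute m_2^n\to m_2^0$, $\acute m_3^n\to m_3^0$ in $L^2(\Omega)$; hence, by the computation leading to (\ref{acute.acute.bar}), also $\acute{\bar m}_2^n\to m_2^0$ and $\acute{\bar m}_3^n\to m_3^0$ in $L^2(\Omega)$. So it suffices to prove $\|\acute m_1^n-m_1^0\|_{L^2(\Omega)}\to0$, and since $\acute m^n\to m^0$ in $L^2_{loc}(\Omega)$ (Definition~\ref{notion of convergence}) it is enough to control the two tails $\{x\ge L\}$ and $\{x\le-L\}$ by a quantity that is small for $L$ large, uniformly in $n$. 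On the right tail I would bound, using $|\acute m_1^n|\le1$, $|m_1^0|\le1$ pointwise and $(1-t)^2\le2(1-t)$ for $t\in[-1,1]$,
\[
\int_{[L,\infty)\times\omega}|\acute m_1^n-m_1^0|^2\ \le\ 4\int_{[L,\infty)\times\omega}(1-\acute m_1^n)\ +\ 4\int_{[L,\infty)\times\omega}(1-m_1^0),
\]
and symmetrically with $1+(\cdot)$ in place of $1-(\cdot)$ on the left tail.

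The heart of the argument is a uniform decay estimate for the sectional average. Since $|\acute m^n|\equiv1$, integrating the cross-sectional identity of Lemma~\ref{lem:ineq.m.bar.m} over $\Omega$ gives $\int_\Omega(1-|\acute{\bar m}^n|^2)=\|\acute m^n-\acute{\bar m}^n\|_{L^2(\Omega)}^2$, and the computation that produced (\ref{acute.acute.bar}) together with the energy bound $E(m^n)\le Cd_n^2$ from (\ref{E(m^n)is.bdd}) shows that this tends to $0$. Writing $1-(\acute{\bar m}_1^n)^2=(1-|\acute{\bar m}^n|^2)+(\acute{\bar m}_2^n)^2+(\acute{\bar m}_3^n)^2$ and using the $L^2(\Omega)$-convergence of $\acute{\bar m}_2^n,\acute{\bar m}_3^n$, I obtain for every $L>0$
\[
\limsup_{n\to\infty}\int_{[L,\infty)\times\omega}\bigl(1-(\acute{\bar m}_1^n)^2\bigr)\ \le\ |\omega|\,\varepsilon(L),\qquad\varepsilon(L):=\int_L^{\infty}\bigl(|m_2^0|^2+|m_3^0|^2\bigr)\ud x,
\]
with $\varepsilon(L)\to0$ as $L\to\infty$ because $m_2^0,m_3^0\in L^2(\mathbb R)$. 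This is the single place where the extra hypothesis of the lemma is used: for $n\ge N$ and $L\ge l$ we have $\acute{\bar m}_1^n(x)=\bar m_1^n(x)\ge0$ on $[L,\infty)$, so $1+\acute{\bar m}_1^n\ge1$ and hence $1-\acute{\bar m}_1^n\le1-(\acute{\bar m}_1^n)^2$ there; integrating over the cross section (and using $\int_\omega\acute m_1^n(x,\cdot)=|\omega|\bar m_1^n(x)$) yields $\limsup_n\int_{[L,\infty)\times\omega}(1-\acute m_1^n)\le|\omega|\varepsilon(L)$, and the symmetric bound on $(-\infty,-l]$ follows from $\bar m_1^n\le0$ there.

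To close the estimate I would treat the $m^0$-tails similarly. By Remark~\ref{rem:lim.bar.m1=1} applied to $m^0\in\tilde A(\Omega)$, $m_1^0(x)\to\pm1$ as $x\to\pm\infty$; choose $L_0\ge l$ so that $m_1^0\ge\tfrac12$ on $[L_0,\infty)$ and $m_1^0\le-\tfrac12$ on $(-\infty,-L_0]$. Then on $[L_0,\infty)$, since $1+m_1^0\ge\tfrac32$, one has $1-m_1^0\le\tfrac23(|m_2^0|^2+|m_3^0|^2)$, so $\int_{[L,\infty)\times\omega}(1-m_1^0)\le|\omega|\varepsilon(L)$ for all $L\ge L_0$, and likewise on the left. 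Combining the compact-part estimate with the two tail estimates and passing to $\limsup_{n\to\infty}$ gives $\limsup_n\|\acute m_1^n-m_1^0\|_{L^2(\Omega)}^2\le C|\omega|\,\varepsilon(L)$ for every $L\ge L_0$; letting $L\to\infty$ yields $\acute m_1^n\to m_1^0$ in $L^2(\Omega)$, which together with Lemma~\ref{strong convergence1} is precisely $\|\acute m^n-m^0\|_{H^1(\Omega)}\to0$.

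The hard part is really the sign bookkeeping in the tails: without a single $l$ valid for all large $n$ — exactly what the hypothesis supplies — the averaged first component $\bar m_1^n$ could stay negative on a set running off to $+\infty$, the inequality $1-\acute{\bar m}_1^n\le1-(\acute{\bar m}_1^n)^2$ would fail, and $\acute m_1^n$ would not be forced to converge to $m_1^0$, even though $|\acute m_1^n|^2$ always converges to $|m_1^0|^2$. The remaining ingredients — the $L^2_{loc}$ convergence on compacts, the cross-sectional Poincar\'e inequality, and the $O(d_n^2)$ energy bound — are standard and enter only in the routine way sketched above.
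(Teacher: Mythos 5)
Your proof is correct and takes essentially the same route as the paper: reduce via Lemma~\ref{strong convergence1} to $L^2$-convergence of the first component, handle a compact piece by the $L^2_{loc}$-convergence, and on the tails use the uniform sign condition (together with $|\acute m^n|=1$, the cross-sectional Poincar\'e bound and the $L^2$-convergence of the transverse components) to control the first-component deficit by $1-(\acute{\bar m}_1^n)^2=(1-|\acute{\bar m}^n|^2)+(\acute{\bar m}_2^n)^2+(\acute{\bar m}_3^n)^2$, whose tail mass is small uniformly in $n$. The only cosmetic difference is the elementary inequality exploiting the sign: you use $|a-b|^2\le 4(1-a)+4(1-b)$ and $1-a\le 1-a^2$ for $a\ge 0$, while the paper compares $|\acute{\bar m}_1^n|^2$ with $|m_1^0|^2$ directly through the same-sign bound $|a-b|^2\le\bigl|a^2-b^2\bigr|$.
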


\begin{proof}

By Lemma~\ref{strong convergence1} it suffices to show that $\lim_{n\to\infty}\|\acute m_1^n-m_1^0\|_{L^2(\Omega)}=0.$
Since $m^0(x)\in \tilde A(\Omega)$ then due to Remark~\ref{rem:lim.bar.m1=1} there exists $l_1>0$ such that
$$m_1^0(x)\leq-\frac{1}{2},\qquad x\in(-\infty, l_1]\qquad \text{and}\qquad m_1^0(x)\geq\frac{1}{2},\qquad x\in[l_1, +\infty).$$
For any fixed $l_2>\max(l,l_1)$ we have,

$$\int_{\Omega}|\acute m_1^n-m_1^0|^2=\int_{[-l_2,l_2]\times\omega}|\acute m_1^n-m_1^0|^2+\int_{\Omega\setminus([-l_2,l_2]\times\omega)}|\acute m_1^n-m_1^0|^2.$$
The first summand converges to zero and we have furthermore that
$\|\acute m_1^n-\acute{\bar m}_1^n\|_{L^2(\Omega)}\to 0,$ thus it suffices to show that
$$\lim_{n\to\infty}\int_{\Omega\setminus([-l_2,l_2]\times\omega)}|\acute{\bar m}_1^n-m_1^0|^2=0.$$
For $n\geq N$ we have
\begin{align*}
\int_{\Omega\setminus([-l_2,l_2]\times\omega)}|\acute{\bar m}_1^n-m_1^0|^2&\leq
\int_{\Omega\setminus([-l_2,l_2]\times\omega)}\big||\acute{\bar m}_1^n|^2-|m_1^0|^2\big|\\
&\leq\int_{\Omega\setminus([-l_2,l_2]\times\omega)}\big||\acute{\bar m}_1^n|^2-|m_1^n|^2\big|+
 \int_{\Omega\setminus([-l_2,l_2]\times\omega)}\big||\acute m_1^n|^2-|m_1^0|^2|.
\end{align*}
 The first summand converges to zero, for the second summand we have by Lemma~\ref{norms convergs to the norm of the limit}

\begin{align*}
\limsup_{n\to\infty}\int_{\Omega\setminus([-l_2,l_2]\times\omega)}\big||\acute m_1^n|^2-|m_1^0|^2\big|
&\leq\limsup_{n\to\infty}\int_{\Omega\setminus([-l_2,l_2]\times\omega)}(|\acute m_2^n|^2+|\acute m_3^n|^2+|m_2^0|^2+|m_3^0|^2)\\
&\leq 2\int_{\Omega\setminus([-l_2,l_2]\times\omega)}(|m_2^0|^2+|m_3^0|^2),
\end{align*}
 which converges to zero as $l_2$ goes to infinity.

\end{proof}

\begin{Lemma}
\label{bar m and [b^1,b^2] lemma}
Let $0<\epsilon<1$ and let the sequence of intervals $\big([b_n^1, b_n^2]\big)_{n\in\mathbb N}$ be such that
$$\bar m_1^n(b_n^1)=-1+\epsilon,\quad\bar m_1^n(b_n^2)=1-\epsilon.$$
Then for sufficiently big $n$ there holds
\begin{align*}
\bar m_1^n(x)&<-1+2\epsilon,\quad x\in(-\infty, b_n^1],\quad\bar m_1^n(x)>1-2\epsilon,\quad x\in[b_n^2,+\infty),\\
-1+\frac{\epsilon}{2}&<m_1^n(x)<1-\frac{\epsilon}{2},\quad x\in [b_n^1,b_n^2].
\end{align*}

\end{Lemma}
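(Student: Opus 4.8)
It is convenient to work with the rescaled magnetizations $\acute m^n$ on the fixed cylinder $\Omega=\mathbb R\times\omega$: they have the same cross-sectional averages $\bar m^n$, and by (\ref{lower.bound.E}) together with (\ref{E(m^n)is.bdd}) the Dirichlet integrals $\int_\Omega|\nabla\acute m^n|^2$ and the averages $\bar m_2^n,\bar m_3^n$ are bounded uniformly in $n$; hence Lemma~\ref{lem:oscilation.preventing} applies to $\acute m^n$ with constants independent of $n$, giving a uniform bound on the number and total length of the intervals on which $\bar m_1^n$ transitions between any two fixed levels of $(-1,1)$. Moreover $\bar m_1^n$ is uniformly $C^{0,1/2}(\mathbb R)$, since $\|\partial_x\bar m_1^n\|_{L^2}^2\le|\omega|^{-1}\int_\Omega|\partial_x\acute m_1^n|^2\le C$, so along any subsequence with $\acute m^n\to m^0$ in the sense of Definition~\ref{notion of convergence} the convergence $\bar m_1^n\to m_1^0$ is locally uniform (the $L^2_{loc}$ convergence gives a.e.\ convergence of the averages, and equicontinuity upgrades it). By (\ref{lower.bound.E}) and lower semicontinuity (exactly as in Lemmas~\ref{morms of avrages converges to the norm of limit}--\ref{strong convergence1}), such a limit satisfies $E_0(m^0)\le\liminf_n E(m^n)/d_n^2=\min_{A_0}E_0=:e_0$; therefore, if $m^0\in A_0$, then $m^0$ is a minimizer of $E_0$, and by the one-dimensional analysis (Appendix, using $\alpha_2<\alpha_3$) $m^0$ is, up to translation and the $180$ degree rotation, the profile $m^\omega$, so $m^0_1$ is \emph{strictly increasing} from $-1$ to $+1$. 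Finally note that the right-hand side of (\ref{lower.bound.E}) is a sum of nonnegative terms that are \emph{local in $x$}.

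I would argue by contradiction. If the conclusion fails, then along a subsequence (not relabeled) there is, for every $n$, an ``offending'' point: either (1) some $x_n\le b_n^1$ with $\bar m_1^n(x_n)\ge-1+2\epsilon$, or (2) some $x_n\ge b_n^2$ with $\bar m_1^n(x_n)\le1-2\epsilon$, or (3) some $x_n\in[b_n^1,b_n^2]$ with $\bar m_1^n(x_n)\le-1+\tfrac\epsilon2$ or $\bar m_1^n(x_n)\ge1-\tfrac\epsilon2$. Passing to a further subsequence, one of (1),(2),(3) persists; I discuss (1), the others being symmetric. Translate in $x$ so that $b_n^1=0$; then $\bar m_1^n(0)=-1+\epsilon$, $x_n\le0$, and by bounded energy (weak $H^1$ plus $L^2_{loc}$ compactness, as in the proof of Lemma~\ref{lem:compactness}) a subsequence of $\{\acute m^n\}$ converges to some $m^0$ with, by the equicontinuity above, $m^0_1(0)=-1+\epsilon$; in particular $m^0$ is nontrivial.

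If $m^0\in A_0$, then $m^0$ is a minimizer and $m^0_1$ is strictly increasing. In the subcase where (a further subsequence of) $x_n$ tends to a finite $x_\ast\le0$, local uniform convergence gives $m^0_1(x_\ast)=\lim_n\bar m_1^n(x_n)\ge-1+2\epsilon$, contradicting $m^0_1(x_\ast)\le m^0_1(0)=-1+\epsilon$. In the subcase $x_n\to-\infty$, the profile $\bar m_1^n$ rises from $-1$ at $-\infty$ to $\ge-1+2\epsilon$ at $x_n$ and returns to $-1+\epsilon$ at $0$, so besides the net transition near the origin it carries a nontrivial excursion near $x_n$, and the two features separate. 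Centering at $x_n$ and extracting produces a nontrivial finite-energy limit $m^\star$ with $m^\star_1(0)\ge-1+2\epsilon$ and $m^\star_1(-\infty)=-1$ (since $\bar m_1^n\to-1$ to the left of the excursion). Splitting $\mathbb R=(-\infty,x_n/2]\cup[x_n/2,+\infty)$ into two disjoint pieces, each of which exhausts $\mathbb R$ after the appropriate recentering (at $x_n$, resp.\ at $0$), and using the locality of the three terms in (\ref{lower.bound.E}) together with lower semicontinuity on each piece, one gets
\begin{equation*}
e_0=\liminf_{n\to\infty}\frac{E(m^n)}{d_n^2}\ \ge\ E_0(m^\star)+E_0(m^0)\ \ge\ \gamma(\epsilon)+e_0\ >\ e_0,
\end{equation*}
a contradiction; here $E_0(m^0)\ge e_0$ because $m^0\in A_0$, and $E_0(m^\star)\ge\gamma(\epsilon):=4\sqrt{|\omega|\alpha_2}\int_{-1}^{-1+2\epsilon}\sqrt{1-t^2}\,\ud t>0$, obtained from $|\omega|\int|\partial_x m^\star|^2+\alpha_2\int(1-(m^\star_1)^2)\ge2\sqrt{|\omega|\alpha_2}\int|\partial_x m^\star_1|\sqrt{1-(m^\star_1)^2}$ applied to the up-and-down parts of the excursion. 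If instead $m^0\notin A_0$, then since $\bar m_1^n(\pm\infty)=\pm1$ there is escaping mass performing a further transition far from the origin; partitioning $\mathbb R$ at points tending to $\pm\infty$ into finitely many pieces, one per transition interval of $\bar m_1^n$ (their number being bounded by Lemma~\ref{lem:oscilation.preventing}), and running the same bookkeeping yields again $\liminf_n E(m^n)/d_n^2\ge e_0+\gamma(\epsilon)$, a contradiction. Cases (2) and (3) are handled identically, comparing in the bounded subcase the value $m^0_1(x_\ast)$ with $m^0_1(0)=-1+\epsilon$ or with $m^0_1(\lim b_n^2)=1-\epsilon$.

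The soft part is the ``bounded'' subcase, which is nothing more than strict monotonicity of a limiting minimizer combined with the equicontinuity of $\bar m_1^n$. The genuine obstacle is the escaping subcase: one must see that any spurious transition-like feature of amplitude $\epsilon$, however far away it drifts, costs an amount $\gamma(\epsilon)>0$ of energy that does not vanish as $n\to\infty$, and that—because the lower bound (\ref{lower.bound.E}) is local in $x$ and there are uniformly finitely many transition intervals by Lemma~\ref{lem:oscilation.preventing}—this cost is \emph{additive} with the $e_0$ forced by the net transition. This additivity is exactly what turns the almost-minimality $E(m^n)/d_n^2\to e_0$ into the asserted near-monotone, localized structure of $\bar m_1^n$; carrying out the finite-partition version of the energy splitting cleanly is where the bulk of the work lies.
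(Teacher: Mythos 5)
Your plan is workable in outline, but as written it leaves a genuine gap exactly at the step you yourself flag as the crux: the additivity of the excursion cost when the offending point escapes. Concretely: (a) in the subcase $x_n\to-\infty$ you assert $m^\star_1(-\infty)=-1$ ``since $\bar m_1^n\to-1$ to the left of the excursion''. This does not follow — the limits in $x$ and in $n$ do not commute, and the descent from level $-1+2\epsilon$ back towards $-1$ may itself drift to $-\infty$ relative to $x_n$, so the recentered limit $m^\star$ need not display any ``down'' part in a bounded window; then the bound $E_0(m^\star)\ge\gamma(\epsilon)$ ``from the up-and-down parts of the excursion'' is simply not available, and you must either split again (one piece per transition interval, which you only gesture at via Lemma~\ref{lem:oscilation.preventing}) or argue that a limit trapped at intermediate levels on a half-line has $E_0=\infty$. (b) With an arbitrary offending point $x_n$ (rather than, say, the last crossing of the level $-1+2\epsilon$ before $b_n^1$), the recentered limit can even be the constant $(1,0,0)$, for which $E_0(m^\star)=0$, so the single two-piece splitting at $x_n/2$ does not give $e_0\ge E_0(m^\star)+E_0(m^0)\ge e_0+\gamma(\epsilon)$: the extra cost then sits in yet another escaping transition, and capturing it is precisely the ``finite-partition bookkeeping'' you defer. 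The same incompleteness affects the case $m^0\notin A_0$, which is handled only by ``running the same bookkeeping''. So the decisive quantitative statement — that a spurious crossing of the level $-1+2\epsilon$ adds a fixed positive amount to $\liminf_n E(m^n)/d_n^2$ — is asserted but not proved.

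It is also worth noting that this statement can be had at fixed $n$, with no compactness, no limiting profiles and no splitting-by-recentering, and this is how the paper argues: if $\bar m_1^n(b_n^3)\ge-1+2\epsilon$ for some $b_n^3<b_n^1$, use continuity and $\bar m_1^n(-\infty)=-1$ to normalize $\bar m_1^n(b_n^3)=-1+2\epsilon$, split $\mathbb R$ into $(-\infty,b_n^3]$, $[b_n^3,b_n^1]$, $[b_n^1,+\infty)$, and apply Lemma~\ref{lemma with f} on each interval together with (\ref{lower.bound.E}) and $\bar m_1^n(\pm\infty)=\pm1$. The three forced variations $2\epsilon$, $\epsilon$ and $2-\epsilon$ give a lower bound exceeding $\min_{A_0}E_0$ by a fixed multiple of $\epsilon\sqrt{\alpha_2|\omega|}$, up to $o(1)$, contradicting almost minimality; the bounds on $[b_n^1,b_n^2]$ and near $+\infty$ follow identically. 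In other words, the total-variation lower bound of Lemma~\ref{lemma with f}, applied on intervals determined by the level crossings, already encodes the ``locality plus additivity'' you are trying to extract through concentration-compactness; if you want to keep your route, the missing escaping-case analysis should be replaced (or repaired) by exactly this finite-$n$ estimate, at which point the compactness machinery in Steps (1)--(3) of your argument becomes unnecessary for this lemma.
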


\begin{proof}

Assume in contradiction that for a subsequence $\{n_k\}$(not relabeled) there is a point $b_n^3\in(-\infty, b_n^1)$ such that
$\bar m_x^n(b_n^3)\geq -1+2\epsilon.$ Since $\bar m_1^n(-\infty)=-1$ and $\bar m_1^n$ is continuous we can without loss of generality assume that
 $\bar m_1^n(b_n^3)=-1+2\epsilon.$  Utilizing Lemma~\ref{lemma with f} for the intervals $(-\infty, b_n^3],$ $[b_n^3,b_n^1],$ $[b_n^1, +\infty)$ and
 (\ref{lower.bound.E}) we discover,
 \begin{align*}
 \frac{E(m^n)}{d_n^2}&\geq\int_{\Omega}|\nabla \acute m^n|^2+\alpha_2\int_{\mathbb R}|m_2^n|^2+\alpha_3\int_{\mathbb R}|m_2^n|^2+o(1)\\
 &\geq 2\sqrt{\alpha_2|\omega|}\bigg(|2\epsilon|+|\epsilon|+|2-2\epsilon|\bigg)+o(1)\\
 &=(4+2\epsilon)\min_{m\in A_0}E_0(m)+o(1),
 \end{align*}
  which contradicts the almost minimizing property of $\{m^n\}.$ Similarly we get the bounds near $\infty$ and in $[b_n^1,b_n^2].$

\end{proof}

\subsection{Proof of Theorem~\ref{th:almost.minimizers}}

\begin{proof}

The proof splits into some steps:\\

\textbf{Step1.} Let us prove that if a sequence of magnetizations converges to some $m^0\in\tilde A(\Omega)$ in the sense of Definition \ref{notion of convergence} and satisfies conditions (\ref{almost.min}) and $\bar m_2^n(x_0)\geq 0$ for some $x_0\in\mathbb R$ and for big $n$ then $m_2^0(x_0)\geq 0.$

We have due to (\ref{almost.min}), that
$$\int_{\Omega_n}|\partial_x \bar m^n|^2\leq\int_{\Omega_n}|\partial_x m^n|^2\leq Cd_n^2,$$ thus
$$\int_{\mathbb R}|\partial_x \bar m^n(x)|^2\ud x\leq \frac{C}{|\omega|}$$
which yields that the sequence $\{\bar m^n\}$ is equicontinuous in $\mathbb R,$ and therefore by the Arzela-Ascoli theorem $\{\bar m^n(x)\}$ has a subsequence with a uniform limit in the interval $[x_0-1,x_0+1].$ It is trivial that the limit is $m^0,$ and thus $\bar m_2^n(x_0)\geq 0$ yields $m_2^0(x_0)\geq 0.$
Evidently, the same sing preserving property holds for the first and the third components of $\bar m^n$ and also for the opposite sign. This means in particular that if $\bar m_1^n(x_0)=0$ for big $n$ then $m_1^0(x_0)=0.$\\

\textbf{Step2.} In the second step we construct the sequences $\{T_n\}$ and $\{R_n\}.$ Note first, that the change of variables mentioned in the theorem translates the domain $\Omega$ to itself and  preserves the energy, thus the minimization problem (\ref{minimization problem}) is invariant under that kind of transformations. Let us now evaluate the constant in estimate (\ref{(a,b).finite.estimate}).
The constant $C_1$ in (\ref{(a,b).finite.estimate}) comes from Lemma~\ref{lem:m2.m.3.bdd.E} and is given by
$$C_1=\frac{2d_n^2E(m^n)}{\alpha_2d_n^2}\leq \frac{2Cd_n^2}{\alpha_2},$$
for big $n.$ Thus we get
\begin{align*}
M(\alpha,\beta,\rho,\omega_n,E(m^n))&=\frac{1}{|\omega_n|}\left(\frac{E(m^n)}{(\alpha-\beta)^2}+\frac{C_1+C_pd_n^2E(m^n)}{1-\rho^2}\right)\\
&\leq \frac{1}{|\omega|}\left(\frac{C}{(\alpha-\beta)^2}+\frac{\frac{2C}{\alpha_2}+C_pCd_n^2}{1-\rho^2}\right)\\
&\leq M_1
\end{align*}

uniformly in $n.$ Next we choose the intervals $[b_n^1,b_n^2]$ to be as in Lemma~\ref{bar m and [b^1,b^2] lemma} with $\epsilon=\frac{1}{3},$ which is possible due to the continuity of $\bar m_1^n$ and the fact that $\bar m_1^n(\pm\infty)=\pm1.$ Owing to Lemma~\ref{bar m and [b^1,b^2] lemma} we get
\begin{align}
\label{b1b2.est}
\bar m_1^n(x)&<-\frac{1}{3},\quad x\in(-\infty, b_n^1],\quad\bar m_1^n(x)>\frac{1}{3},\quad x\in[b_n^2,+\infty),\\
-\frac{5}{6}&<m_1^n(x)<\frac{5}{6},\quad x\in [b_n^1,b_n^2].
\end{align}
Therefore, we obtain by the uniform estimate on $M(\alpha,\beta,\rho,\omega_n,E(m^n))$ and by the estimate (\ref{(a,b).finite.estimate}) of Lemma~\ref{lem:oscilation.preventing} that for sufficiently big $n$ there holds,
\begin{equation}
\label{b2-b1.bdd}
b_n^2-b_n^1\leq M_1.
\end{equation}

 Let now $x_n\in [b_n^1,b_n^2]$ be such that $\bar m_1^n(x_n)=0$. For any $n\in\mathbb N$ we choose $T_n$ to be the translation by $x_n$ and the rotation $R_n$ to be the identity if $\bar m_2^n(x_n)\geq 0$ and the rotation by $180$ degree otherwise. In the last step we prove that the whole sequence
 $\{\acute{\tilde m}^n\}$ converges to $m^\omega$ in $H^1(\Omega).$\\

 \textbf{Step3.} For convenience of notation we will omit the "tilde" in $\acute{\tilde m}^n.$ We are now ready to prove that $\|\acute m^n-m^\omega\|_{H^1(\Omega)}\to 0$ as $n\to\infty.$ Assume in contradiction that for a subsequence (not relabeled) $\|\acute m^n-m^\omega\|_{H^1(\Omega)}\geq \delta>0$ for some $\delta.$ Like in the proof of Lemma~\ref{lem:compactness} we can show that a subsequence of $\{\acute m^n\}$ converges to some $m^0$ in the sense of Definition~\ref{notion of convergence}. By the $\Gamma$-convergence theorem we then have $E_0(m^0)\leq \liminf_{n\to\infty}\frac{E(m^n)}{d_n^2}$ thus
\begin{equation}
\label{m0.mins.E0}
E_0(m^0)=\min_{m\in A_0}E_0(m).
\end{equation}

 Next we have by the sign-preserving property of Step1 and by bounds (\ref{b1b2.est})--(\ref{b2-b1.bdd}), that

\begin{equation}
\label{m0.M1.est}
\bar m_1^0(x)\leq-\frac{1}{3},\quad x\in(-\infty, -M_1],\quad\bar m_1^0(x)\geq\frac{1}{3},\quad x\in[M_1,+\infty).
\end{equation}
Invoking now Remark~\ref{rem:lim.m0} and the properties (\ref{m0.mins.E0}) and (\ref{m0.M1.est}) we discover $m_1^0(\pm\infty)=\pm1,$ which yields
\begin{equation}
\label{m0.in.A0}
m^0\in A_0,
\end{equation}
 i.e., $m_0$ is a minimizer of the minimization problem (\ref{min.prob.E0}). Again, by the sign-preserving property we have $m_1^0(0)=0$ and $m_2(0)\geq 0,$ thus by
 the analysis on the minimization problem (\ref{min.prob.E0}) in Appendix, we establish that actually $m^0$ and $m^\omega$ coincide. Note, finally, that the requirements of
Lemma~\ref{strong convergence2} are satisfied, thus we get
 $$\lim_{n\to\infty}\|\acute m^n-m^\omega\|_{H^1(\Omega)}=\lim_{n\to\infty}\|\acute m^n-m^0\|_{H^1(\Omega)}=0,$$ which is s contradiction. The theorem is proven now.

\end{proof}

We mention that it is easy to see that any rectangle that is not a square and any ellipse that is not a circle satisfies the condition $0<\alpha_2<\alpha_3.$
This condition shows that the cross section $\omega$ does not have many rotational symmetries in some sense. For instance, if $\omega$ has a 90 degree rotational symmetry, then one can show that $\alpha_2=\alpha_3.$ It is also worth mentioning that one can prove a modified version of Theorem~\ref{th:almost.minimizers} in the case when $\omega$ is a disc or a canonical polygon with even number of vertices, namely due to the symmetry it is not true that any of the rotations $R_n$ is either the identity the rotation by $180$ degree, but one can prove their existence. In conclusion we state that Theorem~\ref{th:almost.minimizers} shows that in thin wires energy minimizers with a 180 degree domain wall are transverse (Ne\'el) walls that have the shape of $m^\omega.$

\begin{Remark}
The stability result holds for circular cross sections and cross sections that are canonical polygons.
\end{Remark}
\begin{proof}
It is straightforward to check that the construction of the rotation $R_n$ and translation $T_n$ can be done exactly in the same way as in the proof of Theorem~\ref{th:almost.minimizers}, although the cross sections do not have the required asymmetry.
\end{proof}

\appendix
\section{Appendix}

In appendix we recall some well-know facts, in particular the study of the minimization problem $\min_{m\in A_0}E_0(m).$
We start with a simple lemma.
\begin{Lemma}
\label{lem:mag.m_1.m_2}
For any magnetizations $m_1,m_2\in A(\Omega)$ there holds
$$|E_{mag}(m_1)-E_{mag}(m_2)|\leq \|m_1-m_2\|_{L^2(\Omega)}^2+2\|m_1-m_2\|_{L^2(\Omega)}
\sqrt{E_{mag}(m_1)}$$ \\
\end{Lemma}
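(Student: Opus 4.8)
The plan is to exploit that the magnetostatic energy is a quadratic functional of $m$ whose polarization identity, combined with the elementary estimate $E_{mag}(m)\le\|m\|_{L^2(\Omega)}^2$, yields the asserted Lipschitz-type bound. First I would recall the weak formulation of the defining equation: for $i=1,2$ the potential $u_i$ satisfies $\nabla u_i\in L^2(\mathbb R^3)$ and
$$\int_{\mathbb R^3}\nabla u_i\cdot\nabla\varphi=\int_\Omega m_i\cdot\nabla\varphi\qquad\text{for all }\varphi\in C_0^\infty(\mathbb R^3),$$
which, since $m_i$ is supported in $\Omega$, extends by density of $C_0^\infty(\mathbb R^3)$ in the homogeneous Sobolev space $\dot H^1(\mathbb R^3)$ to every $\varphi$ with $\nabla\varphi\in L^2(\mathbb R^3)$; in particular $\varphi=u_1-u_2$ is admissible. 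This is the same identity already used in the proof of Lemma~\ref{lem:compactness}.

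Next I would control the difference of the gradients. Set $w:=\nabla u_1-\nabla u_2=\nabla(u_1-u_2)\in L^2(\mathbb R^3)$. Subtracting the two weak formulations and testing with $\varphi=u_1-u_2$ gives
$$\|w\|_{L^2(\mathbb R^3)}^2=\int_\Omega(m_1-m_2)\cdot w\,\ud\xi\le\|m_1-m_2\|_{L^2(\Omega)}\,\|w\|_{L^2(\mathbb R^3)},$$
and therefore $\|w\|_{L^2(\mathbb R^3)}\le\|m_1-m_2\|_{L^2(\Omega)}$.

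Finally I would expand, writing $\nabla u_2=\nabla u_1-w$:
$$E_{mag}(m_2)=\|\nabla u_1-w\|_{L^2(\mathbb R^3)}^2=E_{mag}(m_1)-2\int_{\mathbb R^3}\nabla u_1\cdot w\,\ud\xi+\|w\|_{L^2(\mathbb R^3)}^2,$$
so that
$$E_{mag}(m_1)-E_{mag}(m_2)=2\int_{\mathbb R^3}\nabla u_1\cdot w\,\ud\xi-\|w\|_{L^2(\mathbb R^3)}^2.$$
By Cauchy--Schwarz this gives $|E_{mag}(m_1)-E_{mag}(m_2)|\le 2\|\nabla u_1\|_{L^2(\mathbb R^3)}\|w\|_{L^2(\mathbb R^3)}+\|w\|_{L^2(\mathbb R^3)}^2$, and substituting $\|\nabla u_1\|_{L^2(\mathbb R^3)}=\sqrt{E_{mag}(m_1)}$ together with the bound $\|w\|_{L^2(\mathbb R^3)}\le\|m_1-m_2\|_{L^2(\Omega)}$ yields exactly the claimed inequality.

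The only step requiring genuine care is the admissibility of $u_1-u_2$ as a test function and the absence of a boundary contribution at infinity in the integration by parts; this is handled by the density of $C_0^\infty(\mathbb R^3)$ in $\dot H^1(\mathbb R^3)$ together with the Sobolev embedding $\dot H^1(\mathbb R^3)\hookrightarrow L^6(\mathbb R^3)$, and is a completely standard point which is implicitly already invoked earlier in the paper.
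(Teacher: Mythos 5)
Your argument is correct: subtracting the two weak formulations, testing with $u_1-u_2$ to obtain $\|\nabla u_1-\nabla u_2\|_{L^2(\mathbb R^3)}\le\|m_1-m_2\|_{L^2(\Omega)}$, and then expanding the quadratic magnetostatic energy with Cauchy--Schwarz is precisely the standard proof; the paper itself gives no argument but defers to K\"uhn's thesis, where essentially this computation is carried out, so you have matched the intended route. One small point of care: since $|m_i|=1$ on the infinite cylinder, $m_i\notin L^2(\Omega)$, so the individual identities need not extend with their integral representation to all $\varphi$ with $\nabla\varphi\in L^2(\mathbb R^3)$; but your proof only needs the subtracted identity, whose right-hand side involves $m_1-m_2$, and one may assume $\|m_1-m_2\|_{L^2(\Omega)}<\infty$ (otherwise the inequality is trivial), so the density argument goes through as you indicate.
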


\begin{proof}
The proof is elementary and can be found in [\ref{bib:Kuehn}].
\end{proof}

\begin{Lemma}
\label{lem:bound.Green.function}

For any $0<s\leq r$ denote $R(s,r)=[-s,s]\times [-r,r].$ Then for all points $(y_1,z_1)\in \mathbb R^2$ there holds
$$I=\int_{R(s,r)}\frac{\ud y\ud z}{\sqrt{(y-y_1)^2+(z-z_1)^2}}< 10s\Big(1+\ln\frac{r}{s}\Big).$$

\end{Lemma}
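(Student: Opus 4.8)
The plan is to split the rectangle $R(s,r)$ into a small square around the (projected) singularity plus a controlled remainder, estimate the square by passing to polar coordinates, and estimate the remainder by comparing it with an explicit one-dimensional integral. First I would reduce to the case $(y_1,z_1)=(0,0)$: translating the singularity only enlarges the region of integration in the worst case, and by the symmetry and monotonicity of $1/\sqrt{y^2+z^2}$ the supremum over $(y_1,z_1)$ of $I$ is attained (up to the constant we are allowed) when the singularity sits at the center, or at worst one may simply enlarge $R(s,r)$ to a concentric rectangle of twice the side lengths; this only costs a universal constant, which is absorbed into the $10$. So it suffices to bound $\int_{R(s,r)}\frac{\ud y\ud z}{\sqrt{y^2+z^2}}$.

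Next I would write $R(s,r)=R(s,s)\cup\big(R(s,r)\setminus R(s,s)\big)$. On the square $R(s,s)$, which is contained in the disc of radius $s\sqrt2$, polar coordinates give
\begin{equation*}
\int_{R(s,s)}\frac{\ud y\ud z}{\sqrt{y^2+z^2}}\leq\int_0^{2\pi}\int_0^{s\sqrt2}\ud\rho\,\ud\theta=2\sqrt2\,\pi s,
\end{equation*}
a bound of the form $Cs$ with no logarithm. On the remaining region $|z|\in[s,r]$, $|y|\leq s$, I would bound $\sqrt{y^2+z^2}\geq|z|$, so that
\begin{equation*}
\int_{R(s,r)\setminus R(s,s)}\frac{\ud y\ud z}{\sqrt{y^2+z^2}}\leq\int_{s\leq|z|\leq r}\int_{-s}^{s}\frac{\ud y\,\ud z}{|z|}=2s\cdot 2\int_s^r\frac{\ud z}{z}=4s\ln\frac{r}{s}.
\end{equation*}
Adding the two contributions yields $I\leq 2\sqrt2\,\pi s+4s\ln\frac rs$, and since $2\sqrt2\,\pi<10$ and $4<10$ this gives the claimed inequality $I<10s\big(1+\ln\frac rs\big)$ (with room to spare, which also covers the constant lost in the centering reduction).

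The only genuinely delicate point is the first step: justifying that moving the singularity $(y_1,z_1)$ off-center does not make $I$ larger than a universal multiple of its centered value. The clean way is not to prove a sharp rearrangement statement but simply to observe that for any $(y_1,z_1)$ the shifted rectangle $R(s,r)-(y_1,z_1)$ is contained in the rectangle $[-3s,3s]\times[-3r,3r]$ whenever $(y_1,z_1)\in R(s,r)$ — and if it is not in $R(s,r)$ the integrand is even more favorable — after which the same square-plus-strip decomposition applies verbatim with $s,r$ replaced by $3s,3r$, changing only the numerical constant. Everything else is the routine polar-coordinate and $1/z$ computation above, so no further obstacle is expected.
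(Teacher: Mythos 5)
Your route is the same as the paper's: reduce to the case where the singularity lies in $R(s,r)$ (or is replaced by its nearest point of $R(s,r)$, which only increases the integral), enlarge to a rectangle centered at the singularity, split it into the concentric square plus the remaining strips, and estimate the square by comparison with a disc in polar coordinates and the strips by $1/|z|$. Your centered computation, $2\sqrt2\,\pi s+4s\ln\frac rs<10s\big(1+\ln\frac rs\big)$, is exactly the right core estimate and matches the paper's.

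The gap is in the off-center reduction, and it is quantitative but real because the lemma asserts the explicit constant $10$. If $(y_1,z_1)\in R(s,r)$, the translated rectangle is contained in $R(2s,2r)$ (your $3s,3r$ is wasteful), and running your decomposition on $R(2s,2r)$ gives a square term $\le 2\pi\cdot 2\sqrt2\,s=4\sqrt2\,\pi s\approx 17.8\,s$ and a strip term $\le 8s\ln\frac rs$; with $3s,3r$ the square term is $\approx 26.7\,s$ and the strip term $12s\ln\frac rs$. Either way the coefficient of $s$ already exceeds $10$, so nothing can be ``absorbed into the $10$'', and the ``room to spare'' in the centered bound ($10-2\sqrt2\,\pi\approx 1.1$) is far too small to cover a doubling or tripling of $s$. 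To actually get $10$ you must justify that the centered position is (essentially) the worst case: for instance, split the translated rectangle along the two coordinate axes into four corner rectangles $[0,\sigma_i]\times[0,\rho_j]$ with $\sigma_1+\sigma_2=2s$, $\rho_1+\rho_2=2r$, and use that $\sigma\mapsto\int_{[0,\sigma]\times[0,\rho]}(y^2+z^2)^{-1/2}\ud y\ud z$ is concave (its $\sigma$-derivative $\int_0^\rho(\sigma^2+z^2)^{-1/2}\ud z$ is decreasing), so the sum of the four contributions is maximal at $\sigma_i=s$, $\rho_j=r$; after that your centered estimate closes the proof. Alternatively you could settle for a larger explicit constant, which would still suffice for the application in Section 6 where only the scaling matters, but it is not the lemma as stated. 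Note that the paper itself passes to $R(2s,2r)$ and then compares the inner square with a quarter of a disc precisely to keep the constant at $2\sqrt2\,\pi$, so this bookkeeping is exactly the delicate point your sketch leaves open.
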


\begin{proof}

It is clear that if we replace the point $(y_1,z_1)$ by its closest point to $R(s,r),$ then the integral may only increase. Thus one can without loss of generality assume that $(y_1,z_1)\in R(s,r).$ We have that

\begin{align*}
I\leq \int_{R(2s,2r)}\frac{\ud y\ud z}{\sqrt{y^2+z^2}}&=\int_{R(2s,2s)}\frac{\ud y\ud z}{\sqrt{y^2+z^2}}+
\int_{R(2s,2r)\setminus R(2s,2s)}\frac{\ud y\ud z}{\sqrt{y^2+z^2}}\\
&\leq\frac{1}{4}\int_{D_{4\sqrt2 s}(0)}\frac{\ud y\ud z}{\sqrt{y^2+z^2}}+8s\int_{2s}^{2r}\frac{\ud y}{y}\\
&=2\sqrt2 \pi s+8s\ln\frac{r}{s}\\
&<10s\Big(1+\ln\frac{r}{s}\Big).
\end{align*}
\end{proof}

\begin{Lemma}
\label{lemma with f}
 Assume that $\omega\subset\mathbb{R}^2$ is a bounded Lipschitz domain. Then for any interval $(a,b)\subset\mathbb{R},$ positive $\alpha$ and a unit vector field  $f\in H^1\big((a,b)\times\omega, \mathbb{R}^3\big)$ there holds:
$$\int_{(a,b)\times\omega}|\partial_x f|^2+\alpha^2\int_{(a,b)\times\omega}(|f_2|^2+|f_3|^2)\geq
2\alpha|\omega||\bar f_1(a)-\bar f_1(b)|.$$
(The endpoints $a$ and $b$ can take values $-\infty$ and $\infty$ respectively).
\end{Lemma}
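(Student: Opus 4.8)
The plan is to deduce the estimate from a pointwise lower bound along the one-dimensional fibers $x\mapsto f(x,y,z)$, in the spirit of the Modica--Mortola / geodesic estimate, and then to integrate in $(y,z)$.

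First I would extract the consequence of the unit-length constraint. Differentiating $|f|^2\equiv1$ in $x$ gives $f_1\partial_xf_1+f_2\partial_xf_2+f_3\partial_xf_3=0$ a.e., so by the Cauchy--Schwarz inequality, using $f_2^2+f_3^2=1-f_1^2$ and $(\partial_xf_2)^2+(\partial_xf_3)^2=|\partial_xf|^2-(\partial_xf_1)^2$, one obtains $f_1^2(\partial_xf_1)^2\le(1-f_1^2)\big(|\partial_xf|^2-(\partial_xf_1)^2\big)$ a.e., which rearranges to
$$(\partial_xf_1)^2\le(1-f_1^2)\,|\partial_xf|^2\qquad\text{a.e. in }(a,b)\times\omega.$$
On the level set $\{|f_1|=1\}$ both sides vanish a.e. (the gradient of a Sobolev function vanishes a.e. on its level sets), so the inequality holds there too. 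Combining it with the arithmetic--geometric mean inequality applied to the two nonnegative quantities $\dfrac{(\partial_xf_1)^2}{1-f_1^2}$ and $\alpha^2(1-f_1^2)$ yields the pointwise bound
$$|\partial_xf|^2+\alpha^2\big(|f_2|^2+|f_3|^2\big)\ \ge\ 2\alpha\,|\partial_xf_1|\qquad\text{a.e. in }(a,b)\times\omega.$$
(Heuristically, writing $f=(\cos\theta,\sin\theta\cos\psi,\sin\theta\sin\psi)$ one has $|\partial_xf|^2=\theta_x^2+\sin^2\theta\,\psi_x^2\ge\theta_x^2$ and $|\partial_xf_1|=|\sin\theta\,\theta_x|$, so this is just $\theta_x^2+\alpha^2\sin^2\theta\ge2\alpha|\sin\theta\,\theta_x|$; the computation above avoids the angular variables and the poles.)

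Next I would integrate this inequality over $(a,b)\times\omega$. If $\int_{(a,b)\times\omega}|\partial_xf_1|=\infty$ the asserted inequality is immediate, its right-hand side being bounded by $4\alpha|\omega|$. Otherwise, by Fubini, for a.e. $(y,z)\in\omega$ the function $x\mapsto f_1(x,y,z)$ is absolutely continuous on $(a,b)$ with $\partial_xf_1(\cdot,y,z)\in L^1(a,b)$; in particular it has finite limits at any infinite endpoint, and $\int_a^b|\partial_xf_1(x,y,z)|\,\ud x\ge|f_1(b,y,z)-f_1(a,y,z)|$. Integrating in $(y,z)$, using the triangle inequality in reverse and $\int_\omega f_1(x,y,z)\,\ud y\,\ud z=|\omega|\,\bar f_1(x)$, I get
$$\int_{(a,b)\times\omega}|\partial_xf|^2+\alpha^2\int_{(a,b)\times\omega}\big(|f_2|^2+|f_3|^2\big)\ \ge\ 2\alpha\int_\omega|f_1(b,y,z)-f_1(a,y,z)|\,\ud y\,\ud z\ \ge\ 2\alpha|\omega|\,|\bar f_1(a)-\bar f_1(b)|,$$
which is the claim.

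I do not expect a substantial obstacle; the only points needing care are routine facts about Sobolev functions, namely the a.e.\ vanishing of $\partial_xf_1$ on $\{|f_1|=1\}$ and the Fubini argument guaranteeing that almost every $x$-fiber is absolutely continuous, which is what legitimizes the fundamental theorem of calculus and the passage to the infinite endpoints $a,b$. The hypothesis that $\omega$ is Lipschitz is used only so that the slices $f(a,\cdot)$, $f(b,\cdot)$ and the averages $\bar f_1(a),\bar f_1(b)$ are well defined.
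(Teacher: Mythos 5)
Your proof is correct, and its overall architecture is the same as the paper's: a pointwise (fiberwise) lower bound of Modica--Mortola type giving $2\alpha|\partial_x f_1|$, followed by the fundamental theorem of calculus along almost every $x$-fiber and an integration over $\omega$ with the triangle inequality to pass from $\int_\omega|f_1(a,\cdot)-f_1(b,\cdot)|$ to $|\omega|\,|\bar f_1(a)-\bar f_1(b)|$. The one genuine difference is how the pointwise bound is obtained: the paper writes $f_1=\sin\varphi$, $f_2=\cos\varphi\cos\theta$, $f_3=\cos\varphi\sin\theta$ along each segment and uses $\varphi'^2+\alpha^2\cos^2\varphi\geq 2\alpha|\varphi'\cos\varphi|=2\alpha|\partial_x f_1|$, which tacitly requires an a.e.\ differentiable angular lifting of the absolutely continuous fiber map (delicate near the poles $\cos\varphi=0$), whereas you differentiate the constraint $|f|^2\equiv1$, apply Cauchy--Schwarz to get $(\partial_x f_1)^2\leq(1-f_1^2)|\partial_x f|^2$, and then use the arithmetic--geometric mean inequality, disposing of the degenerate set $\{|f_1|=1\}$ by the a.e.\ vanishing of the gradient on level sets. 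Your route is technically cleaner (no lifting needed) and you are also more explicit than the paper about the integrability/absolute-continuity issues on infinite intervals and about the final triangle-inequality step, which the paper leaves implicit.
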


\begin{proof}

Fix a point $(y,z)\in\omega$ and consider the vector field $f$ on the segment with endpoints $(a,y,z)$ and $(b,y,z).$ Being an $H^1$ vector field, $f$ must be absolutely continuous on that segment as a function of one variable, thus denoting
$$f_1(x,y,z)=\sin\varphi(x), f_2(x,y,z)=\cos\varphi(x)\cos\theta(x), f_3(x,y,z)=\cos\varphi(x)\sin\theta(x)$$ we obtain that $\varphi$ and $\theta$ are differentiable a.e. in $[a,b].$
Therefore we can calculate,
\begin{align*}
\int_{(a,b)\times(y,z)}|\partial_x f(\xi)|^2\ud x&+\alpha^2\int_{(a,b)\times(y,z)}(|f_2(\xi)|^2+|f_3(\xi)|^2)\ud x\\
&=\int_a^b(\varphi'^2(x)+\theta'^2(x)\cos^2\varphi(x))\ud x+\alpha^2\int_a^b\cos^2\varphi(x)\ud x\\
&\geq\int_a^b(\varphi'^2(x)\ud x+\alpha^2\int_a^b\cos^2\varphi(x)\ud x\\
&\geq 2\alpha\bigg|\int_a^b\varphi\prime(x)\cos\varphi(x)\ud x\bigg|\\
&=2\alpha|f_1(a,y,z)-f_1(b,y,z)|.
\end{align*}
 An integration of the obtained inequality over $\omega$ completes the proof.
\end{proof}

Recall now that analogues to the proof of the above lemma one can determine the minima of the energy functional
$$E_{\alpha}(m)=\int_{\mathbb{R}}|\partial_x m(x)|^2\ud x+\alpha\int_{\mathbb{R}}(| m_y(x)|^2+|m_z(x)|^2)\ud x$$
for any $\alpha>0$ in the admissible set
 $$A_0=\{m\colon \mathbb{R}\to\mathbb{R}^3 \ : \ |m|=1, m(\pm\infty)=\pm1 \}.$$
 The minimizer is unique up to a translation in the $x$ coordinate and a rotation in the $OYZ$ plane and is given by
 \begin{equation}
 \label{limit problem minimizer}
 m^{\alpha,\beta}=\bigg(\frac{e^{2\sqrt{\alpha}x}\cdot\beta-1}{e^{2\sqrt{\alpha}x}\cdot\beta+1},\  \frac{2\sqrt\beta e^{\sqrt{\alpha}x}}{e^{2\sqrt{\alpha}x}\cdot\beta+1}\cos\theta,\ \frac{2\sqrt\beta e^{\sqrt{\alpha}x}}{e^{2\sqrt{\alpha}x}\cdot\beta+1}\sin\theta\bigg).
 \end{equation}
 Set $m^{\alpha}:=m^{\alpha,1}$ to have $m_x^{\alpha}(0)=0.$ The minimal value of $E_\alpha$ in $A_0$ will be $4\sqrt{\alpha}.$ Let us now find the minimal value and the minima of the reduced problem for any $\omega$ under the condition
 \begin{equation}
 \label{alpha2.alpha3}
 0<\alpha_2<\alpha_3,
 \end{equation}
 i.e., consider the minimization problem
 \begin{equation}
 \label{min.prob.E0}
 \min_{m\in A_0}E_0(m).
 \end{equation}
  Observe that,
 \begin{align*}
E_0(m)&=|\omega|\int_{\mathbb R}|\partial_x m(x)|^2\ud x+\alpha_2\int_{\mathbb R}m_2^2(x)+\alpha_3\int_{\mathbb R}m_3^2(x)\\
&\geq |\omega|\int_{\mathbb R}|\partial_x m(x)|^2\ud x+\alpha_2\int_{\mathbb R}m_2^2(x)+\alpha_2\int_{\mathbb R}m_3^2(x)\\
&\geq 4\sqrt{|\alpha_2\omega|,}
\end{align*}
and the minimum is realized by
\begin{equation}
\label{fixed minimizer of the limit enargy}
m^\omega=\bigg(\frac{e^{2\sqrt{\alpha_\omega}x}-1}{e^{2\sqrt{\alpha_\omega}x}+1},\  \frac{2 e^{\sqrt{\alpha_\omega}x}}{e^{2\sqrt{\alpha_\omega}x}+1},\ 0\bigg),
\end{equation}
where $\alpha_\omega=\frac{\alpha_2}{|\omega|}.$ All other minimizers of $E_0$ are obtained via translations and 180 degree rotations of $m^\omega.$ \\

 \textbf{\large{Acknowledgement}}\\

 The author is very grateful to his Ph.D. thesis supervisor Dr. Prof. S. M\"uller for suggesting the topic and for many fruitful discussions.

\end{document}